\newcommand{\C}{{\mathbb C}}
\newcommand{\A}{{\mathbb A}}
\newcommand{\T}{{\mathbb T}}
\newcommand{\F}{{\mathbb F}}
\newcommand{\I}{{\mathbb I}}
\newcommand{\B}{{\mathbb B}}
\newcommand{\J}{{\mathbb J}}
\newcommand{\G}{{\mathbb G}}
\newtheorem{theorem}{Theorem}[section]
\newtheorem{lemma}[theorem]{Lemma}
\newtheorem{corollary}[theorem]{Corollary}
\newtheorem{definition}[theorem]{Definition}
\def\cal{\mathcal}
\newcommand{\set}[2]{\{ \, #1 \,\, | \, \, #2 \, \} }
\newcommand{\calh}[0]{{\cal H}}
\newcommand{\cala}[0]{{\cal A}}
\newcommand{\calk}[0]{{\cal K}}
\newcommand{\calp}[0]{{\cal P}}
\newcommand{\cali}[0]{{\cal I}}
\newcommand{\calm}[0]{{\cal M}}
\newcommand{\calj}[0]{{\cal J}}
\newcommand{\lin}[0]{ \mbox{\rm{lin}}}
\newcommand{\bal}[0]{ {\rm bal} }
\newcommand{\aut}[0]{ \mbox{\rm{Aut}}}
\newcommand{\alg}[0]{ {\rm Alg} }
\begin{document}

\title{On certain properties of Cuntz--Krieger type algebras}
\author[B. Burgstaller and D. G. Evans]{B. Burgstaller and D. G. Evans}
\address{Mathematisches Institut, Einsteinstra\ss e 62, 48149 M\"unster, Germany
and
Institute of Mathematics and Physics,
Aberystwyth University,
Ceredigion SY23 3BZ,
Wales UK
}
\email{bernhardburgstaller@yahoo.de}
\email{dfe@aber.ac.uk }
\thanks{Both authors were supported by the EU-project Quantum Space and Noncommutative
Geometry, HPRN-CT-2002-00280.}
\subjclass{46L05, 46L55}
\keywords{Cuntz--Krieger, semigraph algebra, ideal, purely infinite, crossed product, nuclear}

\begin{abstract}
The note presents a further study of the class of Cuntz--Krieger type algebras.
A necessary and sufficient condition is identified
that ensures that the algebra is purely infinite,
the ideal structure is studied,
and nuclearity
is proved by presenting the algebra as a crossed product of an AF-algebra by an abelian group.
The results are applied to examples of Cuntz--Krieger type algebras,
such as higher rank semigraph $C^*$-algebras
and higher rank Exel-Laca algebras.
\end{abstract}

\maketitle

\section{Introduction}

Based on the work of Cuntz and Krieger in \cite{cuntzkrieger},
in \cite{burgi2} the first named author considered a class of so-called Cuntz--Krieger type algebras relying on a flexible
generators and relations approach. This class, which is recalled in Section \ref{section2}, includes (aperiodic) Cuntz--Krieger algebras
\cite{cuntzkrieger}, higher rank Exel--Laca algebras \cite{burgiHREL},
(aperiodic) higher rank graph $C^*$-algebras \cite{kumjianpask,raeburnsimsyeendFinitelyAligned},
(aperiodic) ultragraph algebras \cite{tomforde} and (cancelling) higher rank semigraph $C^*$-algebras
\cite{burgiSemigraphI}.

The aim of this note is to analyse these algebras further.
In Section \ref{section3} we show that a Cuntz--Krieger type algebra is purely infinite if and only if
the projections of its core are infinite, see Theorem \ref{piprop}. Corresponding quite tractable adapted reformulations
for higher rank semigraph $C^*$-algebras and higher rank Exel--Laca algebras
are stated in
Corollaries \ref{corollaryPurelyInfiniteSemigraph} and \ref{corollaryPurelyInfiniteHREL}, respectively.
In Section \ref{section4} we study the ideal structure of Cuntz--Krieger type algebras.
There is an injection of certain ideals of the core to the ideals of the Cuntz--Krieger type algebra,
see Theorem \ref{injectlattice}. If these certain ideals are all cancelling (Definitions \ref{DefCancelableFirst} and
\ref{DefCancelable})
then this injection is even a lattice isomorphism, see
Theorem \ref{idealspropertyC}, Corollary \ref{corollaryIdealsIso}, Theorem \ref{idealspropertyCnumber2}
and Corollary \ref{corollaryIdealsIso2}.
We give reformulations of such an isomorphism for higher rank semigraph algebras in Corollaries
\ref{corollarySemigraphIsomorphPhiB} and \ref{corollarySemigraphIsomorphPhiB2}.
In Section \ref{section5} we present the stabilised Cuntz--Krieger type algebras as crossed products of AF-algebras
by abelian groups, see Theorem \ref{takaidualityapplication}. Hence Cuntz--Krieger type algebras are nuclear.





\section{Cuntz--Krieger type algebras} \label{section2}

We briefly recall the basic definitions and facts of the class of Cuntz--Krieger type algebras
introduced in
\cite{burgi2} and slightly extended in \cite{burgiNoteUniqueness}.  

Assume that we are given an alphabet $\cala$, the free nonunital $*$-algebra $\F$
generated by $\cala$, a two-sided self-adjoint ideal $\I$ of $\F$,
and a closed subgroup $H$ of $\T^\cala$ ($\T$ denotes the circle).
We are interested in the quotient $*$-algebra $\F/\I$ and its universal $C^*$-algebra
$C^*(\F/\I)$.
Denote the set of words of $\F/\I$ by $W = \{ a_1 \ldots a_n \in \F/\I|\, a_i \in \cala \cup \cala^*\}$.
(We will always write $x$ rather than $x+\I$ in the quotient $\F/\I$ for elements $x \in \F$ if there is no danger of confusion.)
An element $x$ of a $*$-algebra is called a partial isometry if $x x^* x=x$, and a projection if $x^2 =x^* = x$.

We introduce the following properties (A), (B) and (C') for the system $(\cala,\F,\I,H)$.

\begin{itemize}
\item[(A)]
There exists a gauge action $t:H \longrightarrow \aut(\F/\I)$ 
determined by $t_\lambda (a) = \lambda_a a$ for all $a \in \cala$ and $\lambda = (\lambda_b)_{b \in \cala} \in H$.
\end{itemize}

Denote by $(\hat H,+,0)$ the character group of $(H,\cdot,1)$; note that we write the group operation of $\hat H$ additively.
The gauge action $t$ induces a so-called balance function $\bal: W \backslash \{0\} \longrightarrow \hat H$
from the nonzero words of $\F/\I$ to the character group $\hat H$ determined by
$\bal(a)((\lambda_b)_{b \in \cala}) = \lambda_a	\in \T$, $\bal(xy)= \bal(x)+\bal(y)$ and $\bal(x^*) = -\bal(x)$, where $a \in \cala$,
$(\lambda_b)_{b \in \cala} \in H \subseteq \T^\cala$ and $x,y \in W$ (see \cite[Lemma 3.1]{burgi2}).

Define $\A$ to be the linear span in $\F/\I$ of all words $x \in W \backslash \{0\}$ satisfying $\bal(x)=0$.
Actually, $\A$ is a $*$-algebra.
Words $x$ with balance $\bal(x)=0$ are called zero-balanced.
Write $W_n$ for the set of words with balance $n \in \hat H$.
Since every element of $\F/\I$ is expressable as a linear combination of words, we may write
$\F/\I = \sum_{n \in \hat H} \lin(W_n)$.
Note, however, that this sum might not be a direct sum.

\begin{itemize}
\item[(B)]
$\A$ is locally matricial, that is,
for all $x_1,\ldots,x_n \in \A$ there exists a finite dimensional
$C^*$-subalgebra $A$ of $\A$ such that $x_1,\ldots,x_n \in A$.

\item[(C')]
For every nonzero-balanced word $x \in W\backslash W_0$ and every nonzero projection
$e \in \A$ there exists a nonzero projection $p \le e$ in $\A$ such that
$pxp=0$.
\end{itemize}

\begin{definition}
{\rm
A system $(\cala,\F,\I,H)$ is called a Cuntz--Krieger type system, or $\F/\I$ is called a Cuntz--Krieger type $*$-algebra,
if (A), (B) and (C') are satisfied and
there exists a $C^*$-representation $\pi:\F/\I \longrightarrow A$ which is injective on $\A$.
}
\end{definition}

Throughout assume that $(\cala,\F,\I,H)$ is a Cuntz--Krieger type system if nothing else is said.
There exists a universal enveloping $C^*$-algebra $C^*(\F/\I)$ for $\F/\I$,
and clearly the universal representation $\zeta : \F/\I \longrightarrow C^*(\F/\I)$ is injective on $\A$.
The enveloping $C^*$-algebra $C^*(\F/\I)$ is called the Cuntz--Krieger type algebra
associated to $(\cala,\F,\I,H)$.
A $*$-homomorphism $\F/\I \rightarrow A$ into a $C^*$-algebra $A$ is called
a {\em $C^*$-representation of $\F/\I$}, and {\em $\A$-faithful} if it is faithful on $\A$.
We remark that for a system $(\cala,H,\F,H)$ satisfying (A), (B) and (C'), an $\A$-faithful representation of $\F/\I$ into a $C^*$-algebra exists automatically if the word set $W$ consists of partial
isometries, see \cite[Theorem 3.1]{burgiCancelling}.

We have the following Cuntz--Krieger uniqueness theorem.

\begin{theorem} \label{CKuniqueness}
If $\pi: \F/\I \longrightarrow A$ is an $\A$-faithful representation into a $C^*$-algebra $A$ with dense image in $A$
then
$A$ is canonically isomorphic to $C^*(\F/\I)$ via $\pi(x) \mapsto \zeta(x)$, so $\pi$ is essentially the universal map $\zeta$ (see
\cite[Theorem 2.1]{burgiNoteUniqueness}).
\end{theorem}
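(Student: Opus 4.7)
The plan is to show that the natural surjection $\Phi: C^*(\F/\I) \to A$ produced by the universal property (determined by $\Phi \circ \zeta = \pi$) is injective; its inverse then realises the isomorphism $\pi(x) \mapsto \zeta(x)$. The argument uses each of (A), (B), (C') in turn.

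First, by (A) and universality the gauge action lifts to a strongly continuous action $\tilde t: H \to \aut(C^*(\F/\I))$. Since $H$ is a compact group, averaging against Haar measure yields a faithful conditional expectation $E: C^*(\F/\I) \to \calf^0$ onto the fixed-point subalgebra $\calf^0 = \overline{\zeta(\A)}$ (the zero-spectral subspace of $\tilde t$ is exactly generated by zero-balanced words). By (B), $\calf^0$ is an AF-algebra, and $\A$-faithfulness of $\pi$ forces $\Phi$ to be faithful, hence isometric, on each finite-dimensional $C^*$-subalgebra of $\zeta(\A)$; taking inductive limits, $\Phi$ is isometric on $\calf^0$.

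Second, the task reduces to proving $\|E(a)\| \le \|\Phi(a)\|$ for $a \in C^*(\F/\I)_+$. Granted this, $\Phi(a) = 0$ forces $E(a) = 0$, and faithfulness of $E$ gives $a = 0$; the case of general $a$ with $\Phi(a) = 0$ follows by applying the inequality to $a^*a$.

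The main obstacle is this norm inequality, and here property (C') is decisive. Approximate $a$ in norm by a positive element $b = c^*c$ with $c = \sum_{n \in F} \zeta(c_n) \in \zeta(\F/\I)$, $c_n \in \lin(W_n)$; then $b$ is a finite graded sum whose zero-balanced part is $b_0 := E(b) = \sum_n \zeta(c_n^*c_n) \in \calf^0$. Given $\epsilon > 0$, choose a nonzero projection $q \in \zeta(\A)$ approximating a top spectral projection of $b_0$ so that $\|q b_0 q\| \ge \|b_0\| - \epsilon$. Applying (C') once for each of the finitely many non-zero-balanced words contributing to $b - b_0$, shrink $q$ through finitely many steps to a nonzero subprojection $p \in \A$ with $p \zeta(w) p = 0$ for every such word $w$, so that $pbp = p b_0 p \in \calf^0$. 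Using the isometry of $\Phi$ on $\calf^0$,
\begin{equation*}
\|b_0\| - \epsilon \le \|p b_0 p\| = \|\Phi(p b_0 p)\| = \|\Phi(p b p)\| \le \|\Phi(b)\|.
\end{equation*}
Letting $\epsilon \to 0$ and $b \to a$ gives the required inequality. The delicate step is the iterated application of (C'): the property produces a subprojection killing one word at a time, so one must shrink $p$ through all the finitely many stages while keeping it nonzero and below $q$.
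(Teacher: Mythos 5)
The paper itself contains no proof of this theorem: it is imported verbatim from \cite[Theorem 2.1]{burgiNoteUniqueness}, so there is no in-paper argument to compare against. Your proof is the standard gauge-invariant uniqueness argument --- lift the gauge action to $C^*(\F/\I)$, average over the compact group $H$ to get the faithful conditional expectation onto $\overline{\zeta(\A)}$ (this is exactly Lemma \ref{lemmacondexpect} of the paper), use local matriciality of $\A$ and $\A$-faithfulness to make the induced surjection $\Phi$ isometric on $\overline{\zeta(\A)}$, and establish $\|E(a)\|\le\|\Phi(a)\|$ by compressing with projections supplied by iterating (C') over the finitely many nonzero-balanced words in an approximant $c^*c$ --- and it is correct; it is almost certainly the argument of the cited reference. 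Two small points worth making explicit: surjectivity of $\Phi$ uses that a $*$-homomorphism of $C^*$-algebras has closed range together with the density of $\pi(\F/\I)$; and the isometry of $\Phi$ on the core also uses that $\zeta$ itself is injective on $\A$, which the paper records as a consequence of the existence of some $\A$-faithful representation. Your handling of the iterated application of (C') (shrinking through a finite chain of nonzero projections, one word of nonzero balance at a time, and using that the top spectral projection of $E(b)$ in a finite-dimensional subalgebra survives compression) is exactly right.
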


The next lemma states that we usually may assume without loss of generality that $\zeta$ is injective.
We then usually avoid notating $\zeta$ and regard $\F/\I$ as a subset of $C^*(\F/\I)$.

\begin{lemma}  \label{lemmaInjectZeta}
We may assume without loss of generality that the universal representation $\zeta:\F/\I \longrightarrow C^*(\F/\I)$
is injective by dividing out the kernel of $\zeta$. The new quotient $\F/\I$ is a Cuntz--Krieger $*$-algebra
again ($\cala,\F$ and $H$ remain unchanged). $\A$ remains unchanged under this modification.
\end{lemma}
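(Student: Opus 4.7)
The plan is to set $\I'$ equal to the preimage in $\F$ of $\ker(\zeta)$ under the quotient map $q:\F\to\F/\I$, so that $\I'$ is a self-adjoint two-sided ideal of $\F$ with $\I\subseteq\I'$ and $\F/\I'\cong(\F/\I)/\ker(\zeta)$, and then to verify that $(\cala,\F,\I',H)$ is again a Cuntz--Krieger type system with the same core $\A$. Since $\zeta$ is faithful on $\A$ by hypothesis, we have $\ker(\zeta)\cap\A=0$, so $\A$ embeds into $\F/\I'$; moreover, a word of $\F$ is nonzero in $\F/\I'$ exactly when its class in $\F/\I$ lies outside $\ker(\zeta)$, and since the balance function is compatible with $q$, the image of $\A$ is precisely the linear span of the nonzero zero-balanced words of $\F/\I'$. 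Thus $\A$ is unchanged, and property \propAF{} transfers automatically.

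For \propH{}, the crucial point is that $\ker(\zeta)$ is stable under the gauge action $t_\lambda$ on $\F/\I$. I would argue this via universality: $\zeta\circ t_\lambda:\F/\I\to C^*(\F/\I)$ is a $C^*$-representation, so it extends to a $*$-endomorphism of $C^*(\F/\I)$; running the same argument with $\lambda^{-1}$ produces an inverse, hence the extension is an automorphism and $\ker(\zeta)$ is $t_\lambda$-invariant. Therefore $t$ descends to an action of $H$ on $\F/\I'$ with the prescribed formula on generators, giving \propH{}.

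Property (C') is then a routine translation: for any nonzero $x\in W\setminus W_0$ in $\F/\I'$ and any nonzero projection $e\in\A$, a lift $\tilde x$ of $x$ lies in $W\setminus W_0$ for $\F/\I$ (nonzero because $x\neq 0$, nonzero-balanced because $q$ preserves balance), so the original (C') produces a nonzero $p\le e$ in $\A$ with $p\tilde xp\in\I\subseteq\I'$, and since $\A$ is unchanged this same $p$ witnesses (C') for $\F/\I'$. Finally, $\zeta$ factors through $\F/\I'$ as an injective $C^*$-representation that is in particular $\A$-faithful, completing the verification; Theorem \ref{CKuniqueness} then yields a canonical isomorphism $C^*(\F/\I')\cong C^*(\F/\I)$, so no information is lost by the replacement. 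The only nontrivial step I foresee is the gauge-invariance of $\ker(\zeta)$, handled by the universal-property argument above.
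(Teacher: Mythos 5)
Your argument is correct and complete in all essentials. A direct comparison with the paper is not possible here: the authors explicitly omit the proof of this lemma, deferring to \cite[Propositions 2 and 4]{burgiCancelling}, so your write-up has to be judged on its own merits. The one genuinely nontrivial point --- gauge-invariance of $\ker(\zeta)$ --- you identify correctly and handle correctly: $\zeta\circ t_\lambda$ extends by universality to an endomorphism $\theta_\lambda$ of $C^*(\F/\I)$ with $\theta_\lambda\circ\zeta=\zeta\circ t_\lambda$, the same device with $\lambda^{-1}$ and density of the image of $\zeta$ shows $\theta_\lambda$ is invertible, and then $\zeta(t_\lambda(x))=\theta_\lambda(\zeta(x))=0$ for $x\in\ker(\zeta)$. (This is the same mechanism the paper itself invokes at the start of Section \ref{section5} to extend the gauge action to $C^*(\F/\I)$; an equivalent shortcut is to note that $\|\zeta(t_\lambda(x))\|=\sup_\pi\|\pi(t_\lambda(x))\|=\sup_\pi\|\pi(x)\|=\|\zeta(x)\|$ since precomposition with $t_\lambda$ permutes the representations.) The remaining verifications --- that the balance function is preserved on nonzero words so the new core is exactly $q(\A)$, that injectivity of $\zeta$ on $\A$ makes $q|_{\A}$ an isomorphism onto the new core (giving \propAF{}), that (C') lifts and pushes down, and that the induced injective representation is $\A$-faithful so Theorem \ref{CKuniqueness} identifies the two enveloping $C^*$-algebras --- are all routine and you carry them out correctly. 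Only a cosmetic slip: in the (C') step you write $p\tilde xp\in\I\subseteq\I'$, but $p\tilde xp$ already denotes an element of $\F/\I$; what you mean is that $p\tilde xp=0$ in $\F/\I$ and hence its image in $\F/\I'$ vanishes. This does not affect the argument.
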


In a previous preprint of this note we proved the last lemma and the next lemma. However,
we have reproved and published them already now in \cite{burgiCancelling} in \cite[Propositions 2 and 4]{burgiCancelling}.
The setting in \cite{burgiCancelling} generalises the setting of this note by allowing the image of the balance function,
here the commutative group $\hat H$, to be a non-commutative group.
Say that a $*$-algebra $X$ satisfies the $C^*$-property if for every $x \in X$, $x x^*=0$ implies $x=0$.

\begin{lemma}
$\zeta$ is injective if and only if $\F/\I$ satisfies the $C^*$-property.
The kernel of $\zeta$ is the ideal generated by $\{x \in \F/\I| \, x x^*=0\}$.
\end{lemma}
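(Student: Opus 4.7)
The plan is to prove the stronger second assertion $\ker(\zeta)=J$, where $J$ denotes the two-sided $*$-ideal of $\F/\I$ generated by $\{x : xx^*=0\}$; the ``if and only if'' then follows at once, because the $C^*$-property says precisely that this generating set is $\{0\}$, hence $J=0$, while conversely, injectivity of $\zeta$ combined with the $C^*$-identity in $C^*(\F/\I)$ forces every $x$ with $xx^*=0$ to satisfy $\zeta(x)=0$, and hence $x=0$. The easy inclusion $J\subseteq\ker(\zeta)$ is the same $C^*$-identity argument: if $xx^*=0$ in $\F/\I$, then $\zeta(x)\zeta(x)^*=0$ in the $C^*$-algebra $C^*(\F/\I)$ and therefore $\zeta(x)=0$.

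For the reverse inclusion $\ker(\zeta)\subseteq J$, my strategy is to combine the gauge action with $\A$-faithfulness of $\zeta$. First I would extend the gauge action $t$ of the compact abelian group $H$ by universality to a strongly continuous action on $C^*(\F/\I)$, using the fact that each $\zeta\circ t_\lambda$ is a $C^*$-representation of $\F/\I$. Integrating against characters $n\in\hat H$ then furnishes Fourier-coefficient maps onto the spectral subspaces $B_n\subseteq C^*(\F/\I)$, and the family $\{B_n\}_{n\in\hat H}$ is linearly independent inside $C^*(\F/\I)$.

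Given $y\in\ker(\zeta)$, I would write $y$ as a finite linear combination of words and group by balance, producing a decomposition $y=\sum_n y_n$ with $y_n\in\lin(W_n)$ and only finitely many $y_n$ nonzero. Since $\zeta(y_n)\in B_n$ and $\sum_n\zeta(y_n)=0$, the linear independence of the $B_n$ forces $\zeta(y_n)=0$ for every $n$. In particular $\zeta(y_n y_n^*)=0$; since $y_n y_n^*$ is a linear combination of words in $W_n\cdot W_{-n}$, all of balance $0$, it lies in $\A$, and $\A$-faithfulness of $\zeta$ then gives $y_n y_n^*=0$ in $\F/\I$. Each $y_n$ is thus a generator of $J$, so $y=\sum_n y_n\in J$.

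The only delicate step is the linear independence of the spectral subspaces, which is the standard output of a strongly continuous action of a compact abelian group on a $C^*$-algebra via Fourier coefficients; the only thing that genuinely needs checking is the continuous extension of $t$ to $C^*(\F/\I)$, handled by the universality argument above. The non-uniqueness of the decomposition $\F/\I=\sum_n\lin(W_n)$ flagged in Section~\ref{section2} is harmless, because we only need $y_n y_n^*=0$ for \emph{some} decomposition of $y$ by balance, and any such choice maps, after $\zeta$, to a genuinely direct sum in $C^*(\F/\I)$.
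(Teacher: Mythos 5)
Your argument is correct. Note, however, that the paper does not actually supply a proof of this lemma: it explicitly defers to \cite[Propositions 2 and 4]{burgiCancelling}, where the statement is proved in a more general setting in which the image of the balance function is allowed to be a non-commutative group (so that the Haar-integration over $H$ you use is not available there, and the conditional expectation of Lemma \ref{lemmacondexpect} has to be built by other means). Your proof is a clean, self-contained alternative that exploits the abelian hypothesis of the present note: extend the gauge action to $\theta$ on $C^*(\F/\I)$ by universality, check strong continuity on the dense $*$-subalgebra $\zeta(\F/\I)$ and extend by uniform boundedness, and then use the Fourier-coefficient maps $E_n(b)=\int_H \overline{n(\lambda)}\,\theta_\lambda(b)\,d\lambda$ to conclude that a finite sum of elements from distinct spectral subspaces vanishes only if each summand does. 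The two points that need care are both handled correctly: (i) the decomposition $y=\sum_n y_n$ with $y_n\in\lin(W_n)$ need not be unique, but the argument only requires \emph{some} such decomposition, and it delivers $y_n y_n^*=0$ for each chosen summand, so in fact $\ker\zeta=\lin\{x\mid xx^*=0\}$, which is a priori even smaller than the generated ideal $J$; and (ii) $y_ny_n^*$ lies in $\A$ because each product $w_iw_j^*$ of words of balance $n$ and $-n$ is zero-balanced, so $\A$-faithfulness of $\zeta$ (which holds by definition of the universal representation of a Cuntz--Krieger type system) applies. The reverse inclusion $J\subseteq\ker\zeta$ and the equivalence with the $C^*$-property follow from the $C^*$-identity exactly as you say.
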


\begin{lemma} \label{lemmacondexpect}
There exists a conditional expectation
$F:C^*(\F/\I) \longrightarrow C^*(\A) \subseteq C^*(\F/\I)$ determined by
$F (\zeta(w)) = 1_{\{\bal(w)= 0\}} \zeta(w)$
for
words $w \in W$ (see \cite[Proposition 2]{burgiCancelling}).
\end{lemma}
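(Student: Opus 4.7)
My plan is to construct $F$ by averaging an extension of the gauge action over the compact group $H$ with respect to its Haar measure, and then verify the stated formula on words.

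First, using property (A) and the universal property of $C^*(\F/\I)$, each automorphism $t_\lambda$ of $\F/\I$ gives a $C^*$-representation $\zeta \circ t_\lambda: \F/\I \to C^*(\F/\I)$, which extends uniquely to an endomorphism $\tilde t_\lambda$ of $C^*(\F/\I)$; functoriality plus $\tilde t_{\lambda^{-1}}$ as inverse makes $\tilde t_\lambda$ an automorphism and $\lambda \mapsto \tilde t_\lambda$ a group homomorphism. The next step is strong continuity: on a generator $\zeta(a)$ one has $\tilde t_\lambda(\zeta(a)) = \lambda_a \zeta(a)$ and $\lambda \mapsto \lambda_a$ is continuous because it is the coordinate projection from $H \subseteq \T^\cala$. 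Continuity then propagates to all words, hence to $\zeta(\F/\I)$ by linearity; the usual $\varepsilon/3$ argument together with $\|\tilde t_\lambda\|=1$ extends continuity to all of $C^*(\F/\I)$.

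Once strong continuity is in hand, define
\[
F(x) \;=\; \int_H \tilde t_\lambda(x)\, d\mu(\lambda),
\]
where $\mu$ is normalised Haar measure on $H$ (which is compact, being a closed subgroup of $\T^\cala$). Standard arguments show that $F$ is a contractive completely positive projection onto the fixed point algebra $C^*(\F/\I)^{\tilde t}$, and that it is a conditional expectation in the sense of bimodule compatibility with its image.

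To evaluate $F$ on a word $w \in W$, note that by the very definition of $\bal$ one has $t_\lambda(w) = \bal(w)(\lambda)\, w$, and therefore
\[
F(\zeta(w)) \;=\; \left(\int_H \bal(w)(\lambda)\, d\mu(\lambda)\right)\zeta(w) \;=\; 1_{\{\bal(w)=0\}}\zeta(w),
\]
by orthogonality of characters on the compact abelian group $H$. To finish, I must identify the fixed point algebra with $C^*(\A)$. The inclusion $C^*(\A) \subseteq C^*(\F/\I)^{\tilde t}$ is immediate since $t_\lambda$ fixes every zero-balanced word. For the reverse inclusion, given a fixed point $x$, approximate it by some $\zeta(y)$ with $y \in \F/\I$; writing $y = \sum_{n \in \hat H} y_n$ with $y_n \in \lin(W_n)$ (finite sum) gives $F(\zeta(y)) = \zeta(y_0) \in C^*(\A)$, and contractivity of $F$ yields $\|x - \zeta(y_0)\| = \|F(x) - F(\zeta(y))\| \le \|x - \zeta(y)\|$, so $x \in C^*(\A)$.

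The main subtlety I expect is verifying strong continuity of $\tilde t$ carefully, since $\F/\I = \sum_n \lin(W_n)$ need not be a direct sum; however, this does not affect the argument because the formula for $F$ is obtained \emph{after} defining $F$ intrinsically by Haar integration, so well-definedness on words is automatic rather than an input.
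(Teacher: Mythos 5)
Your argument is correct and complete: extending the gauge action to $C^*(\F/\I)$ by universality, checking strong continuity on generators and propagating it by density, averaging against Haar measure on the compact group $H$, evaluating on words via orthogonality of characters, and identifying the fixed-point algebra with $C^*(\A)$ by applying the contraction $F$ to approximants. You are also right to flag that the non-directness of $\F/\I=\sum_n\lin(W_n)$ is harmless, since $F$ is defined intrinsically and the word formula is only a consequence. The one point of comparison worth making is that the paper does not prove this lemma at all; it cites Proposition 2 of the ``cancelling'' paper, whose setting is strictly more general in that the balance function may take values in a \emph{non-commutative} discrete group, so that there is no compact dual group $H$ to average over and the conditional expectation must be obtained by other means (essentially the theory of topologically graded $C^*$-algebras, where one works with the grading subspaces $\lin(W_n)$ directly). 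Your Haar-averaging construction is the standard and most economical route in the abelian case treated here, and it buys a self-contained proof together with the extra information that $F$ is faithful on positive elements fixed by the action and that $C^*(\A)$ is exactly the fixed-point algebra; the cited approach buys generality beyond abelian $\hat H$ at the cost of a less direct construction.
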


\section{Pure Infiniteness} \label{section3}

In this section we analyse the pure infiniteness of a Cuntz--Krieger type algebra
$C^*(\F/\I)$.

Recall that a projection $p$ in a $C^*$-algebra $A$ is called infinite if it is the source
projection $s^*s$ of a partial isometry $s$ in $A$ with range projection $s s^*$ being smaller than $p$.
Recall the following simple lemma.

\begin{lemma} \label{pilemma}
If a projection is infinite then any other projection which is bigger in Murray--von Neumann order is also infinite. 
\end{lemma}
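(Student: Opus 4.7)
The plan is to combine the partial isometry witnessing the infiniteness of $p$ with a Murray--von Neumann witness for $p \preceq q$ to build an explicit infiniteness witness for $q$. Concretely, I would start with $s$ satisfying $s^*s = p$ and $ss^* \lneq p$ (so $e := p - ss^* \ne 0$), together with a partial isometry $v$ satisfying $v^*v = p$ and $p' := vv^* \le q$. My candidate is
\[
t := vsv^* + (q - p'),
\]
which transports $s$ through $v$ to act on $p'$ and extends by the identity on the orthogonal complement $q - p'$ inside $q$.

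To check $t^*t = q$ I would expand the product into four terms. The two cross terms vanish because $p' \le q$ forces $v = p'v$, whence $v^*(q - p') = 0 = (q - p')v$. The main term collapses via the partial-isometry identities $sp = s$ (automatic) and $ps = s$ (valid here because $ss^* \le p$), which together give $s^*ps = s^*s = p$; so the main term is $vpv^* = vv^*vv^* = p'$. Combined with $(q - p')^2 = q - p'$, this yields $t^*t = p' + (q - p') = q$.

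A parallel calculation gives $tt^* = vss^*v^* + (q - p')$, so the strict inequality $tt^* \lneq q$ reduces to $vss^*v^* \lneq p'$. I would then write
\[
p' - vss^*v^* = vpv^* - vss^*v^* = v(p - ss^*)v^* = vev^*,
\]
and observe that $ve$ is a partial isometry with nonzero source $(ve)^*(ve) = ev^*ve = epe = e$; hence its range projection $vev^*$ is nonzero. Therefore $vss^*v^* \lneq p'$, and so $tt^* \lneq q$, giving the desired witness.

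The only real care needed is bookkeeping the partial-isometry identities---$sp = s$, $ps = s$ from $ss^* \le p$, $v = p'v$, $pv^* = v^*$---so that the cross terms cancel and the strictness $ss^* \lneq p$ survives transport along $v$. Everything else is immediate from the partial-isometry axioms, with no analytic input required.
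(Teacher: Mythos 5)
Your proof is correct. The paper states this as a recalled ``simple lemma'' and supplies no proof of its own, so there is nothing to compare against; your argument --- conjugating the infiniteness witness $s$ by the Murray--von Neumann witness $v$ and extending by the identity on $q - vv^*$ to get $t = vsv^* + (q - vv^*)$ --- is the standard one, and the bookkeeping (the identity $ps = s$ coming from $ss^* \le p$, the vanishing of the cross terms via $v^*(q - vv^*) = 0$, and the nonvanishing of $vev^*$ deduced from its source projection $e \ne 0$) all checks out.
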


\begin{theorem} \label{piprop}
A Cuntz--Krieger type algebra
$C^*(\F/\I)$ is purely infinite if and only if
every nonzero projection of $\A$ is infinite in $C^*(\F/\I)$.
\end{theorem}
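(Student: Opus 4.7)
The forward direction is immediate from the definition: if $C^*(\F/\I)$ is purely infinite, every nonzero projection in any nonzero hereditary $C^*$-subalgebra is infinite, so in particular every nonzero projection of $\A \subseteq C^*(\F/\I)$ is infinite.

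For the nontrivial direction, I plan to assume that every nonzero projection of $\A$ is infinite and show that the hereditary $C^*$-subalgebra generated by an arbitrary nonzero positive $a \in C^*(\F/\I)$ contains an infinite projection. The argument will lean on three ingredients: the conditional expectation $F$ of Lemma \ref{lemmacondexpect}, which I would verify is faithful by realising it as integration of the gauge action $t$ against normalised Haar measure on the compact group $H \subseteq \T^\cala$; the local matriciality property \propAF, which makes $\overline{\A}$ an AF-algebra; and property (C'), which lets one suppress nonzero-balanced words inside hereditary corners supported on projections of $\A$.

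First I would apply $F$ to $a$ to obtain a nonzero positive element $F(a) \in \overline{\A}$. Since $\overline{\A}$ is AF, a standard spectral argument inside a finite-dimensional $C^*$-subalgebra of $\A$ approximating $F(a)$ will, for any prescribed $\varepsilon > 0$, produce a nonzero projection $p_0 \in \A$ and a scalar $\lambda > 0$ with $\|p_0 F(a) p_0 - \lambda p_0\| < \varepsilon \lambda$. Next, I would approximate $a$ to within $\varepsilon$ by a finite linear combination $\sum_i c_i w_i$ of words and let $w_1,\ldots,w_n$ enumerate those $w_i$ with $\bal(w_i) \ne 0$. Iterating (C') along this finite list, each application carried out inside the previously produced projection, yields a nonzero projection $q \le p_0$ in $\A$ with $q w_i q = 0$ for $i = 1,\ldots,n$. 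Then $qaq$ is within $\mathcal{O}(\varepsilon)$ of $qF(a)q$, which is in turn within $\varepsilon\lambda$ of $\lambda q$. Choosing $\varepsilon$ small enough makes $qaq$ a positive invertible element of the corner $q C^*(\F/\I) q$ whose spectrum is bounded away from $0$.

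Setting $b = (qaq)^{-1/2}$ computed inside $q C^*(\F/\I) q$ and $x = qb$, I get $q = x^* a x$, so $q$ lies in the hereditary $C^*$-subalgebra generated by $a$. By hypothesis, $q$ is infinite in $C^*(\F/\I)$, and the witnessing partial isometry automatically lives in $q C^*(\F/\I) q$, hence in this hereditary subalgebra, giving the required infinite projection; Lemma \ref{pilemma} is then available if one wishes to pass to a larger projection dominating it. The main technical obstacle is producing the diagonal corner $qaq \approx \lambda q$: it demands that one coordinate the AF-approximation of $F(a)$ with the iterated application of (C') so that a \emph{single} subprojection $q$ both witnesses the spectral approximation in $\overline{\A}$ and simultaneously annihilates all finitely many offending nonzero-balanced words appearing in the word approximant of $a$. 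Finiteness of the word approximation together with the nesting built into (C') is what makes this possible.
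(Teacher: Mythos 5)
Your proposal is correct in substance and follows the same underlying strategy as the paper, which imitates Robertson--Steger: forward direction via Lemma \ref{pilemma}, and for the converse, cut down a positive element by a projection of $\A$ that kills the non-zero-balanced part, approximate by a scalar multiple of a projection, and invoke the hypothesis. The main difference is in how the cut-down projection is produced. The paper does not iterate (C') directly; instead it first enlarges the system to one satisfying the stronger property (C) of \cite{burgi2} (via a tensor product with a commutative locally matricial algebra, reducing pure infiniteness of $C^*(\F/\I)$ to that of the tensor product), so that it can quote \cite[Lemma 2.6]{burgi2}, which hands over a single projection $Q$ with $QyQ = Qy_1Q$ and the exact norm identity $\|Fy\|=\|QyQ\|$. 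You instead obtain $q$ by first locating a spectral projection $p_0$ of a finite-dimensional approximant of $F(a)$ and then iterating (C') over the finitely many non-zero-balanced words of a word approximant of $a$, each step nested inside the previous projection; this is legitimate and arguably more self-contained, though you should fix the quantifier order by taking $\lambda$ near $\|F(a)\|$ (e.g.\ the top eigenvalue of the approximant), so that $\lambda$ stays bounded below as $\varepsilon \to 0$ and the perturbation estimate $\|qaq-\lambda q\|\lesssim \varepsilon$ really yields invertibility in the corner. One genuine slip to repair: from $q = x^*ax$ it does \emph{not} follow that $q$ lies in the hereditary subalgebra $\overline{aC^*(\F/\I)a}$ (that identity only places $q$ in the ideal generated by $a$). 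The standard fix is to set $v = a^{1/2}x$, so that $v^*v = q$ and $vv^* = a^{1/2}xx^*a^{1/2}$ is a projection in $\overline{aC^*(\F/\I)a}$ Murray--von Neumann equivalent to $q$, hence infinite; this is also how the paper concludes, via the partial isometry $Rh$ with initial projection $hR^*Rh \in hAh$.
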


\begin{proof}
We assume that $\zeta$ is injective (Lemma \ref{lemmaInjectZeta}).
Define $A=C^*(\F/\I)$.
Assume that $A$ is purely infinite. Then for any nonzero
projection $e \in \A$ the hereditary $C^*$-algebra $e A e$
contains some infinite projection $p$.
Since $p \le e$, $e$ is infinite in $A$ by Lemma \ref{pilemma}.

To prove the other direction, assume that every nonzero projection in $\A$ is infinite in $A$.
It is proved in
Theorem 2.1 of \cite{burgiNoteUniqueness} that there exists a larger
Cuntz--Krieger type system $S=(\cala \times \calp, \G,\J,H \times \{1\})$
than $(\cala,\F,\I,H)$ such that
$\G/\J \cong \F/\I \otimes \F'/\I'$, where $\F'/\I'$ is a commutative unital locally matricial algebra,
and the system $S$ satisfies property (C) of \cite{burgi2}.
(The accurate assertion of (C) is here unimportant, as we will only need it to apply a lemma of \cite{burgi2}).
If we can show that $C^*(\G/\J) \cong C^*(\F/\I) \otimes C^*(\F'/\I')$ is purely infinite,
then it is not difficult to check that $C^*(\F/\I)$ is also purely infinite.
(The following fact holds in general: If $A \otimes D$ is purely infinite for two $C^*$-algebras
$A$ and $D$ where $D$ is unital and commutative, then
$A$ is purely infinite.)

That is why we may assume without loss of generality in what follows that the system $(\cala,\F,\I,H)$
satisfies property (C) of \cite{burgi2}.
To show that $A=C^*(\F/\I)$ is purely infinite, we imitate the proof of \cite[Proposition 5.11]{robertsonsteger2}.
Let $h$ be a nonzero positive element of $A$.
We have to show that $\overline{h A h}$ contains an infinite projection.
Let $\varepsilon > 0$, and choose $y \ge 0$ in $\F/\I$ such that $\|y-h^2\| \le \varepsilon$.

By \cite[Lemma 2.6]{burgi2} (applied to $\pi=\zeta$)
we are provided with a faithful expectation
$F: A \rightarrow C^*({\A})$
such that for every representation
$y= \sum_{\gamma \in \hat H} y_\gamma$ (where $y_\gamma \in \lin (W_\gamma)$)
there exists a projection $Q \in \A$ satisfying
$Q y Q = Q y_1 Q \in \A$ and $\|F y\| = \|Q y Q \|$.

We may assume without loss of generality that $\|F h^2 \| = 1$.
We have
$$\|F y \| \ge \|F h^2\| - \varepsilon = 1 - \varepsilon.$$
Let $Q y Q \in \calm$ for some finite dimensional $C^*$-algebra $\calm \subseteq \A$.
We choose a system of generating matrix units for $\calm$
such that the positive element $Q y Q$ has diagonal form in $\calm=M_{k_1} \oplus \ldots \oplus M_{k_d}$.
By projecting on the largest diagonal entry, we can choose a positive operator $R_1 \in \calm$
such that $P=R_1 Q y Q R_1$ is a projection and $\|R_1\| \le (1 - \varepsilon)^{-1/2}$.
By hypothesis $P \in \A$ is an infinite projection.

It follows that $\|R_1 Q h^2 Q R_1 - P\| \le \|R_1^2\| \|Q\|^2 \|y-h^2\| \le \varepsilon /(1-\varepsilon)$.
By functional calculus one obtains $R_2 \in A_+$, so that $R_2 R_1 Q h^2 Q R_1 R_2$
is a projection and
$$\|R_2 R_1 Q h^2 Q R_1 R_2 - P\| \le 2 \varepsilon/(1-\varepsilon).$$
For small $\varepsilon$ one can then find an element $R_3$ in $A$ such that
$$R_3 R_2 R_1 Q h^2 Q R_1 R_2 R_3^* = P.$$
Let $R=R_3 R_2 R_1 Q$, so that $R h^2 R^*=P$. Consequently, $R h$ is a partial isometry,
whose initial projection $h R^* R h$ is a projection in $h A h$ and whose
final projection is $P$.
Moreover, if $V$ is a partial isometry in $A$ such that $V^* V = P$ and
$V V^* < P$, then $(h R^*)V(R h)$ is a partial isometry in $h A h$ with initial
projection $h R^* R h$ and final projection strictly less
than $h R^* R h$.
\end{proof}

We shall now apply the last theorem to cancelling higher rank
semigraph algebras \cite{burgiSemigraphI}, which are special Cuntz--Krieger
type $*$-algebras.

\begin{corollary}   \label{corollaryPurelyInfiniteSemigraph}
A cancelling semigraph $C^*$-algebra $C^*(\F/\I)$ (see \cite[Definitions 5.1 and 7.2]{burgiSemigraphI}) is purely infinite if and only if every standard projection (see \cite[Definition 5.14]{burgiSemigraphI}) is infinite in $C^*(\F/\I)$.
\end{corollary}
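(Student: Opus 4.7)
The plan is to deduce this from Theorem \ref{piprop}, which already characterises pure infiniteness in terms of infiniteness of all nonzero projections of the core $\A$. What remains is therefore purely an internal statement about the semigraph core $\A$: I need to show that every nonzero projection in $\A$ is infinite if and only if every \emph{standard} projection is infinite.

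The forward implication is immediate, because standard projections are in particular projections of $\A$ (this is part of their definition in \cite[Definition 5.14]{burgiSemigraphI}), so ``every nonzero projection of $\A$ is infinite'' trivially specialises to ``every standard projection is infinite''.

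For the reverse implication, the point is to combine property \propAF{} (local matriciality of $\A$) with Lemma \ref{pilemma}. Given a nonzero projection $p \in \A$, property \propAF{} places $p$ inside a finite-dimensional $C^*$-subalgebra $M$ of $\A$. In the semigraph setting one knows from \cite{burgiSemigraphI} that such a finite-dimensional $M$ is built as a sum of matrix blocks whose matrix units are words in the standard projections; in particular every nonzero projection of $M$ dominates, in the Murray--von Neumann sense, some standard projection $q$. By hypothesis $q$ is infinite in $C^*(\F/\I)$, so Lemma \ref{pilemma} forces $p$ to be infinite as well. Together with Theorem \ref{piprop} this finishes the equivalence.

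The main obstacle is the step asserting that every nonzero projection in a finite dimensional subalgebra of the semigraph core dominates a standard projection; this is really a bookkeeping statement about the explicit matrix-unit description of the core given in \cite{burgiSemigraphI}, and is where one actually uses the \emph{cancelling} hypothesis (so that the standard projections genuinely generate $\A$ as a locally matricial algebra in the way required). Once that structural fact is quoted, the rest is a direct application of Theorem \ref{piprop} and Lemma \ref{pilemma}.
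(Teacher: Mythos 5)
Your proposal is correct and follows essentially the same route as the paper: reduce via Theorem \ref{piprop} to projections of $\A$, then use the fact that every nonzero projection of $\A$ dominates a standard projection in Murray--von Neumann order together with Lemma \ref{pilemma}. The structural fact you flag as the ``main obstacle'' is precisely \cite[Corollary 6.4]{burgiSemigraphI}, which the paper simply cites rather than re-deriving through the finite-dimensional subalgebra picture.
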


\begin{proof}
Cancelling semigraph algebras are algebras of amenable Cuntz--Krieger systems \cite{burgiCancelling} (this follows from the discussion in
\cite[Section 7]{burgiSemigraphI}), which again are Cuntz--Krieger type $*$-algebras (since the image of the balance map, $\hat H$, is an
abelian group). So we can apply Theorem \ref{piprop}.
We just need to recall that
by \cite[Corollary 6.4]{burgiSemigraphI} every nonzero projection in $\A$ is larger or equal than a standard projection in Murray--von Neumann order,
and so is infinite by Lemma \ref{pilemma} if every standard projection is infinite.
\end{proof}

The next corollary concerns
higher rank Exel--Laca algebras \cite{burgiHREL}, which are special Cuntz--Krieger type algebras.

\begin{corollary}   \label{corollaryPurelyInfiniteHREL}
Let $C^*(\F/\I)$ be a higher rank Exel--Laca algebra
\cite{burgiHREL}.
Then
$C^*(\F/\I)$
is purely infinite if and only if every nonzero projection of the form
$P_{a_1} \ldots P_{a_n}$ ($a_i \in \cala$, $P_a = a a^*$)
is infinite in $C^*(\F/\I$).
\end{corollary}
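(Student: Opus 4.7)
The plan is to deduce this corollary from Theorem \ref{piprop} in exactly the same spirit as the previous Corollary \ref{corollaryPurelyInfiniteSemigraph}. First I would note that the higher rank Exel--Laca algebras of \cite{burgiHREL} are Cuntz--Krieger type algebras in the sense recalled in Section \ref{section2} (this is already established in \cite{burgiHREL}), so Theorem \ref{piprop} applies: pure infiniteness of $C^*(\F/\I)$ is equivalent to the statement that every nonzero projection of the zero-balanced core $\A$ is infinite in $C^*(\F/\I)$.

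The forward direction is then immediate, since the projections $P_{a_1}\ldots P_{a_n}$ are themselves elements of $\A$ (each $P_a=aa^*$ has balance $0$, and products of zero-balanced elements are zero-balanced). So the nontrivial work is in the reverse direction: assuming every nonzero $P_{a_1}\ldots P_{a_n}$ is infinite, I need to promote this infiniteness to every nonzero projection in $\A$. The strategy is to combine this with Lemma \ref{pilemma}: it suffices to show that every nonzero projection $e\in\A$ dominates, in Murray--von Neumann order, some nonzero product $P_{a_1}\ldots P_{a_n}$.

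For this step I would appeal to the structural description of the core of a higher rank Exel--Laca algebra given in \cite{burgiHREL}. There the core $\A$ is shown to be locally matricial (property (B)) and, more specifically, is built from the commutative algebra of finite products of the $P_a$'s together with their matrix-unit conjugates. Consequently any minimal projection of a finite-dimensional subalgebra of $\A$ is Murray--von Neumann equivalent to a nonzero product $P_{a_1}\ldots P_{a_n}$, and any nonzero projection $e \in \A$ dominates such a minimal projection after approximation. Hence $e$ dominates (up to equivalence) some $P_{a_1}\ldots P_{a_n}$, which is infinite by hypothesis, so $e$ is infinite by Lemma \ref{pilemma}.

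The main obstacle is the last step: pinning down the precise statement from \cite{burgiHREL} that every nonzero projection of $\A$ is Murray--von Neumann dominated by a product of the form $P_{a_1}\ldots P_{a_n}$ (the analogue, in the Exel--Laca setting, of \cite[Corollary 6.4]{burgiSemigraphI} used in the previous corollary). Once this reduction is quoted, Theorem \ref{piprop} and Lemma \ref{pilemma} close the argument exactly as in the proof of Corollary \ref{corollaryPurelyInfiniteSemigraph}.
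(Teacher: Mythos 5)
Your proposal is correct and follows essentially the same route as the paper: apply Theorem \ref{piprop}, observe that the products $P_{a_1}\ldots P_{a_n}$ lie in $\A$, and for the converse reduce to showing that every nonzero projection of $\A$ dominates some nonzero $P_{b_1}\ldots P_{b_n}$ in Murray--von Neumann order, then invoke Lemma \ref{pilemma}. The fact you flag as the remaining obstacle is exactly what the paper supplies by citation, namely the chain $p \succsim x x^* \succsim x^* x = Q_{a_1}\ldots Q_{a_n} \ge P_{b_1}\ldots P_{b_n} \neq 0$ from \cite[Proposition 3.3]{burgiNoteUniqueness} together with \cite[Corollary 4.14]{burgiHREL} and \cite[Lemma 4.5]{burgiHREL}.
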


\begin{proof}
By \cite[Proposition 3.3]{burgiNoteUniqueness}, \cite[Corollary 4.14]{burgiHREL}
and \cite[Lemma 4.5]{burgiHREL} every projection $p \in \A$ allows the following estimate in
Murray--von Neumann order:
$$p \succsim x x^* \succsim x^* x = Q_{a_1} \ldots Q_{a_n} \ge P_{b_1} \ldots P_{b_n} \neq 0$$
for some word $x$ in the letters of the alphabet $\cala$, and some letters $a_i,b_i \in \cala$.
Hence, the claim follows from Lemma \ref{pilemma} and Theorem \ref{piprop}.
\end{proof}

\section{Ideal structure} \label{section4}

In this section we investigate the ideal structure of a Cuntz--Krieger
type algebra $C^*(\F/\I)$.
We assume that $\zeta$ is injective (Lemma \ref{lemmaInjectZeta}).

Write $\Sigma$ for the set of two-sided self-adjoint ideals in $\F/\I$. Denote by
$\cali$ the set of closed two-sided ideals in $C^*(\F/\I)$.
Suppose that $\B$ is a $*$-subalgebra of $\A$.
Write $\Sigma^{\B}$ for the set of self-adjoint two-sided ideals in $\B$.
Define
$$
\Sigma_\B = \set{J \cap \B \in \Sigma^\B}{\, J \in \Sigma}.
$$
For a subset $X$ of $\F/\I$, define
$\Sigma(X) \in \Sigma$ to be the two-sided self-adjoint ideal in $\F/\I$ generated by $X$,
and
$\cali (X) \in \cali$
the closed two-sided ideal in $C^* (\F/\I)$ generated by $X$.
Denote by $q_X:\F/\I \longrightarrow (\F/\I)/\Sigma(X)$ the quotient map.

\begin{lemma} \label{someidealminilemma}
For all $J \in \Sigma$ one has
$J \cap \B = (\Sigma( J \cap \B )) \cap \B$.
\end{lemma}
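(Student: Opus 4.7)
The plan is to unwind the definitions and prove the equality by two inclusions, both of which are essentially formal.

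For the inclusion $J \cap \B \subseteq \Sigma(J \cap \B) \cap \B$, I would simply observe that $\Sigma(J \cap \B)$ is by definition a two-sided self-adjoint ideal of $\F/\I$ containing the set $J \cap \B$, so $J \cap \B \subseteq \Sigma(J \cap \B)$; and $J \cap \B \subseteq \B$ is trivial. Intersecting gives the desired inclusion.

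For the reverse inclusion $\Sigma(J \cap \B) \cap \B \subseteq J \cap \B$, the key point is minimality of the generated ideal. Since $J \in \Sigma$ is itself a two-sided self-adjoint ideal of $\F/\I$ which contains $J \cap \B$, and $\Sigma(J \cap \B)$ is the smallest such ideal, we obtain $\Sigma(J \cap \B) \subseteq J$. Intersecting both sides with $\B$ then yields $\Sigma(J \cap \B) \cap \B \subseteq J \cap \B$.

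Combining the two inclusions gives the lemma. There is no real obstacle here: the statement is a formal consequence of the definition of the ideal $\Sigma(X)$ generated by a subset $X$, together with the observation that $J$ itself serves as an upper bound for the minimality argument. No appeal to the properties \propH, \propAF, \propP, to the conditional expectation $F$ of Lemma \ref{lemmacondexpect}, or to the structure of $\A$ is needed; the proof is valid for any $*$-subalgebra $\B$ of $\F/\I$ whatsoever.
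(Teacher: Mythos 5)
Your proof is correct and is essentially the paper's own argument: the paper compresses it into the single chain $J \cap \B \subseteq (\Sigma(J \cap \B)) \cap \B \subseteq \Sigma(J) \cap \B = J \cap \B$, where your minimality step $\Sigma(J\cap\B)\subseteq J$ appears as $\Sigma(J\cap\B)\subseteq\Sigma(J)=J$. Your closing observation that the statement is purely formal and needs none of (A), (B), (C') is also accurate.
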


\begin{proof}
$J \cap \B \subseteq J \cap \B \cap \B \subseteq (\Sigma(J \cap \B)) \cap \B \subseteq \Sigma(J) \cap \B = J \cap \B$.
\end{proof}

\begin{lemma} \label{lemmaIdemIdeal}
We have $\Sigma_\B = \set{J \cap \B \in \Sigma^{\B}}{ J \in \Sigma, \, J= \Sigma(J \cap \B)}$.
\end{lemma}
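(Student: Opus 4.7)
The plan is to prove the set equality by showing both inclusions; the right-hand side is just $\Sigma_{\B}$ cut out by the extra condition $J = \Sigma(J \cap \B)$, so one inclusion will be trivial and the other will rest on the previous lemma.

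First, the inclusion $\{ J \cap \B \,|\, J \in \Sigma,\ J = \Sigma(J \cap \B)\} \subseteq \Sigma_{\B}$ is immediate from the definition of $\Sigma_{\B}$, since every element on the left is already of the form $J \cap \B$ with $J \in \Sigma$.

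For the reverse inclusion, I would take an arbitrary element $K = J \cap \B \in \Sigma_{\B}$ with $J \in \Sigma$ and then replace $J$ by the (potentially smaller) ideal $J' := \Sigma(J \cap \B)$. The point is that $J'$ is by construction an ideal in $\Sigma$ that is generated by its intersection with $\B$, so once I verify that $J' \cap \B$ equals the original $K$, the element $K$ is exhibited as the intersection of an ideal satisfying $J' = \Sigma(J' \cap \B)$ with $\B$. By Lemma \ref{someidealminilemma} applied to $J$, we have $J \cap \B = \Sigma(J \cap \B) \cap \B = J' \cap \B$, which gives $K = J' \cap \B$. Moreover $\Sigma(J' \cap \B) = \Sigma(J \cap \B) = J'$, so the extra condition is automatic.

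I do not expect any obstacle here; the lemma is essentially a definitional reformulation using Lemma \ref{someidealminilemma} as the only nontrivial input, and the substitution $J \rightsquigarrow \Sigma(J \cap \B)$ is the key idea (it makes the ideal ``idempotent'' with respect to the operation $J \mapsto \Sigma(J \cap \B)$ without changing its intersection with $\B$).
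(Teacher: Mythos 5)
Your proof is correct and follows essentially the same route as the paper: both arguments replace $J$ by $\Sigma(J\cap\B)$ and invoke Lemma \ref{someidealminilemma} to see that this substitution preserves the intersection with $\B$ while making the condition $J=\Sigma(J\cap\B)$ automatic. (The paper's own proof cites Lemma \ref{lemmaIdemIdeal} inside itself, which is evidently a typo for Lemma \ref{someidealminilemma} --- exactly the lemma you use.)
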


\begin{proof}
Given $J \in \Sigma$, consider $I=\Sigma(J \cap \B)$. By Lemma \ref{lemmaIdemIdeal}
we have $I=\Sigma(I \cap \B)$ and $J \cap \B = I \cap \B$, which proves the claim.
\end{proof}

\begin{lemma} \label{lemmaIdemIdeal2}
We have $\Sigma_\B = \set{I \in \Sigma^{\B}}{ \Sigma(I) \cap \B = I}$.
\end{lemma}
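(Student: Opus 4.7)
The plan is to prove the two inclusions, leveraging the characterisation from Lemma \ref{lemmaIdemIdeal} and the basic properties of the ideal-generating operator $\Sigma(\cdot)$. The statement is essentially a reformulation: Lemma \ref{lemmaIdemIdeal} describes $\Sigma_\B$ via pairs $(J, J\cap\B)$ where $J$ is recoverable from its trace $J\cap\B$ as $J=\Sigma(J\cap\B)$, whereas the present lemma reformulates this condition intrinsically in terms of the trace ideal $I\in\Sigma^\B$ alone.

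For the inclusion $\supseteq$, I would start with $I\in\Sigma^\B$ satisfying $\Sigma(I)\cap\B=I$ and simply set $J=\Sigma(I)\in\Sigma$. Then $J\cap\B=I$ by assumption, so $I\in\Sigma_\B$ by the very definition of $\Sigma_\B$.

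For the inclusion $\subseteq$, I would take $I\in\Sigma_\B$ and apply Lemma \ref{lemmaIdemIdeal} to produce $J\in\Sigma$ with $J=\Sigma(J\cap\B)$ and $I=J\cap\B$. Substituting $I=J\cap\B$ into $J=\Sigma(J\cap\B)$ gives $J=\Sigma(I)$, whence $\Sigma(I)\cap\B=J\cap\B=I$, establishing the required condition.

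There is essentially no obstacle here; the lemma is a purely formal manipulation built on top of Lemmas \ref{someidealminilemma} and \ref{lemmaIdemIdeal}. The only subtle point worth making explicit is that $I\in\Sigma^\B$ is automatic in the $\subseteq$ direction because $J\cap\B$ is a self-adjoint two-sided ideal in $\B$ whenever $J$ is a self-adjoint two-sided ideal in $\F/\I$ and $\B$ is a $*$-subalgebra of $\A\subseteq\F/\I$, so the two descriptions of $\Sigma_\B$ are compatible in their ambient sets.
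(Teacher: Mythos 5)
Your proof is correct and takes essentially the same route as the paper: the $\supseteq$ direction is identical (set $J=\Sigma(I)$), and for $\subseteq$ the paper simply cites Lemma \ref{someidealminilemma} directly on $I=J\cap\B$, whereas you route through Lemma \ref{lemmaIdemIdeal}, which is itself an immediate consequence of Lemma \ref{someidealminilemma}. The difference is immaterial.
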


\begin{proof}
Given $I \in \Sigma_\B$,
we have $I=J \cap \B$ for some ideal $J \in \Sigma$.
By Lemma \ref{someidealminilemma} we obtain $\Sigma(I) \cap \B = I$. The reverse implication is obvious.
\end{proof}

\begin{lemma}   \label{lemmaLatticaA}
We have
\begin{equation}   \label{equ213}
\Sigma_\A \,=\, \set{ I \in \Sigma^\A }{ \forall x,y \in W: \bal(x)+\bal(y)=0 \,\,\Longrightarrow
\,\, x I y \subseteq I }.
\end{equation}
Hence $\Sigma_\A$ is closed under the lattice operation $I+J$.
\end{lemma}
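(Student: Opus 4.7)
The plan is to combine the characterization $\Sigma_\A = \{I \in \Sigma^\A : \Sigma(I) \cap \A = I\}$ from Lemma~\ref{lemmaIdemIdeal2} with the conditional expectation $F$ of Lemma~\ref{lemmacondexpect}. The forward inclusion is direct: if $I = J \cap \A$ for some $J \in \Sigma$, and $x,y \in W$ satisfy $\bal(x) + \bal(y) = 0$, then for any $i \in I$ the element $xiy$ lies in $J$ (because $J$ is a two-sided self-adjoint ideal), and writing $i$ as a linear combination of zero-balanced words shows that $xiy$ is a linear combination of words of balance $\bal(x) + 0 + \bal(y) = 0$, so $xiy \in \A$. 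Hence $xiy \in J \cap \A = I$.

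For the reverse inclusion, let $I \in \Sigma^\A$ satisfy $xIy \subseteq I$ whenever $\bal(x) + \bal(y) = 0$, and take $z \in \Sigma(I) \cap \A$. Write $z$ as a finite sum of terms $r i s$ with $r,s \in \F/\I$ and $i \in I$ (treating missing left or right factors as formally having balance $0$), and decompose each $r = \sum_n r_n$, $s = \sum_m s_m$ according to $\F/\I = \sum_{n \in \hat H} \lin(W_n)$. Since $i \in \A = \lin(W_0)$, the summand $r_n i s_m$ lies in $\lin(W_{n+m})$, and applying $F$ together with $F\zeta(z) = \zeta(z)$ (which holds because $z \in \A$) yields
\[
\zeta(z) \;=\; \sum \zeta(r_n i s_m),
\]
where the sum ranges over all occurring triples with $n + m = 0$. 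For each such pair, expanding $r_n$ and $s_m$ as linear combinations of words $w_1 \in W_n$ and $w_2 \in W_{-n}$, the hypothesis forces every product $w_1 i w_2$ into $I$, hence so is the entire sum. Injectivity of $\zeta$ (Lemma~\ref{lemmaInjectZeta}) then gives $z \in I$.

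Closure of $\Sigma_\A$ under the sum operation is then immediate from the characterization: if $I,J$ both satisfy the balance condition, then $x(I+J)y \subseteq xIy + xJy \subseteq I+J$. The only non-trivial step is the reverse inclusion, where the main obstacle is that the decomposition $\F/\I = \sum_{n \in \hat H} \lin(W_n)$ need not be direct, so one cannot simply read off a "zero-balanced component" at the algebraic level; the $C^*$-level conditional expectation $F$ circumvents this at the cost of invoking the injectivity of the universal representation $\zeta$.
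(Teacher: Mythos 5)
Your proof is correct and follows essentially the same route as the paper: the forward inclusion is the same direct computation, and the reverse inclusion likewise reduces to showing $\Sigma(I)\cap\A = I$ by applying the conditional expectation $F$ of Lemma~\ref{lemmacondexpect} to kill the non-zero-balanced summands and then using injectivity of $\zeta$ (which the paper assumes throughout Section~\ref{section4}). Your closing remark about why $F$ is needed, namely that the sum $\F/\I=\sum_{n\in\hat H}\lin(W_n)$ need not be direct, is exactly the point.
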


\begin{proof}
Write $\calj$ for the righthanded set of (\ref{equ213}).
Consider $I \in \Sigma_\A$ and write it as $I=J \cap \A$ for some $J \in \Sigma$.
If $i \in I$ and $x,y \in W$ with $\bal(x)+ \bal(y)=0$ then
$xiy \in \A \cap J$. This shows that $\Sigma_\A \subseteq \calj$.

To prove $\calj \subseteq \Sigma_\A$, consider $I \in \calj$.
Since $I \subseteq \A$,
$I \subseteq \Sigma(I) \cap \A$.
For the reverse inclusion
consider $z \in \Sigma(I) \cap \A$.
We may write $z= \sum \alpha_k x_k i_k y_k$
for some scalars $\alpha_k \in \C$, some $i_k \in I$, and some (possibly empty) words $x_k , y_k \in W$.
We have $F(z)=z$ for the conditional expectation $F$ of Lemma \ref{lemmacondexpect} as
$z \in \A$.
Hence $z= \sum \beta_k x_k i_k y_k$ for some scalars $\beta_k \in \C$ such that
$\beta_k = 0$ if $\bal(x_k)+\bal( y_k) \neq 0$.
This shows that $z \in I$ as $I \in \calj$.
We have proved that $I= \Sigma(I) \cap \A$, which is in $\Sigma_\A$.
\end{proof}

In the next lemma we state a result of Bratteli \cite{bratteli},
now for not necessarily separable AF-algebras.
We skip the proof which just consists of a slight adaption of Bratteli's proof.

\begin{lemma} \label{bratellilemma}
Let $A$ be a locally matricial algebra and $\overline A$ its $C^*$-algebraic norm closure.
There is a bijection $\gamma$ between the family of self-adjoint two-sided ideals in $A$
and the family of closed two-sided ideals in $\overline A$ through
$\gamma(I) = \overline I$ and $\gamma^{-1}(I) = I \cap A$.
%
\end{lemma}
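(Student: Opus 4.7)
My plan is to mimic Bratteli's original argument, now worked out for the upward directed family of finite-dimensional $C^*$-subalgebras $\{M_\alpha\}$ whose union is $A$. Since $\gamma(I) = \overline I$ is evidently a closed two-sided ideal of $\overline A$ and $\gamma^{-1}(J) = J \cap A$ is evidently a self-adjoint two-sided ideal of $A$, the real content is the pair of identities $\overline{J \cap A} = J$ for a closed ideal $J \subseteq \overline A$ and $\overline I \cap A = I$ for an algebraic ideal $I \subseteq A$.

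The first identity rests on the observation that for every finite-dimensional $*$-subalgebra $M \subseteq A$ the induced map $M/(M \cap J) \to \overline A / J$ is an injective $*$-homomorphism between $C^*$-algebras and is therefore isometric; this gives $d(a, M \cap J) = d(a, J)$ for every $a \in M$. Then for $x \in J$ and $\varepsilon > 0$, picking $a \in A$ within $\varepsilon$ of $x$, a finite-dimensional $M \subseteq A$ containing $a$, and $b \in M \cap J$ within $\varepsilon$ of $a$, yields $b \in J \cap A$ with $\|x - b\| < 2\varepsilon$, so $J = \overline{J \cap A}$.

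The second identity is the main obstacle, as an algebraic ideal is not a priori closed under any limits. I would take $x \in \overline I \cap A$, fix $\alpha$ with $x \in M_\alpha$, and observe that both $I \cap M_\alpha$ and $\overline I \cap M_\alpha$ are two-sided self-adjoint ideals of the finite-dimensional $C^*$-algebra $M_\alpha$ and are therefore direct sums of matrix blocks; it suffices to prove that every minimal central projection $p$ of $M_\alpha$ belonging to $\overline I$ already belongs to $I$. By definition of $\overline I$ there exists $y \in I$ with $\|p - y\| < 1$; replacing $y$ by $(y + y^*)/2$ (valid because $I$ is self-adjoint) one may assume $y$ is self-adjoint, and by upward directedness of $\{M_\alpha\}$ one can choose $\beta \ge \alpha$ with $y \in M_\beta$. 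Writing the ideal $I \cap M_\beta$ as $p_\beta M_\beta$ for its central projection $p_\beta \in M_\beta$, self-adjointness of $y$ gives $y = p_\beta y = y p_\beta$, whence
\[
\|p - p p_\beta\| \,=\, \|p(1 - p_\beta)\| \,=\, \|(p-y)(1-p_\beta)\| \,\le\, \|p-y\| \,<\, 1.
\]
Since $p$ and $p p_\beta = p_\beta p$ are commuting projections with $p p_\beta \le p$, the difference $p - p p_\beta$ is itself a projection, necessarily zero because its norm is strictly less than $1$. Hence $p = p p_\beta \in A \cdot I \subseteq I$, contradicting the assumption that $p \notin I$ and completing the argument.
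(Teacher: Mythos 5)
Your proof is correct and is essentially the ``slight adaptation of Bratteli's proof'' that the paper declines to write out: the paper's own (omitted) argument likewise runs through the finite-dimensional local structure, establishing $J=\overline{J\cap A}$ via the quotient-norm/isometry observation and the other identity via a distance-one estimate for projections, just as you do. One cosmetic slip: the identity $y=p_\beta y=y p_\beta$ follows from $y\in I\cap M_\beta=p_\beta M_\beta$ together with the centrality of $p_\beta$ in $M_\beta$, not from the self-adjointness of $y$ (which you only need to be able to speak of $\|p-y\|=\|p-y^*\|$ when symmetrising); this does not affect the argument.
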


\if 0=1
\begin{proof}
We are going to show that $\gamma$ is surjective.
Let $I$ be a two-sided closed ideal in $\overline{A}$.
If $A$ is separable 
then the surjectivity of $\gamma$ follows from \cite[Lemma 3.1]{bratteli}
which states that $I = \gamma \gamma^{-1}(I)$.
Now allow $A$ to be non-separable.
Let $x \in I$. Then $x = \lim_{n \ge 1} x_n$ for certain $x_n \in B_n$, where $B_n$ are certain
finite dimensional sub-$C^*$-algebras of $A$.
Let $B=\bigcup_{n\ge 1}^\infty B_n$. 
Let $I'$ be the closed ideal $I'=I \cap \overline B$ of $\overline B$.
By \cite[Lemma 3.1]{bratteli} we have $x \in I' = \gamma_{B} \gamma_B^{-1}(I') = \overline{I' \cap B} \subseteq \overline{I \cap A}$.
Hence, $I \subseteq \overline{I \cap A} \subseteq I$ and so $I=\gamma \gamma^{-1}(I)$.

For the injectivity of $\gamma$ we follow the proof of \cite[Theorem 3.3]{bratteli}.
Let $J_1 \neq J_2$ be two self-adjoint two-sided ideals in $A$.
Then $x \in J_1 \backslash J_2$ for some $x \in M_{n_1} \oplus \ldots \oplus M_{n_k} \subseteq A$.
Thus clearly $x' := 1_{M_{n_i}} x \in J_1 \backslash J_2$ for some $1 \le i \le k$.
Hence $M_{n_i} \in J_1 \backslash J_2$.
Choose a nonzero projection $p \in M_{n_i}$. Let $B$ be any finite dimensional $C^*$-algebra in $A$
such that $M_{n_i} \subseteq B$.
Then $p + (J_2 \cap B)$ is a nonzero projection in $B/(J_2 \cap B)$.
Thus $\inf_{x \in J_2 \cap B} \|p - x\| = 1$. Since $B$ was arbitrary, we obtain $\inf_{x \in J_2} \|p - x\| = 1$.
Hence $p \notin \overline{J_2}$, while $p \in \overline{J_2}$. Thus $\overline{J_1} \neq \overline{J_1}$,
and hence $\gamma$ is injective.
\end{proof}
\fi

%


\begin{theorem} \label{injectlattice}
Every $*$-subalgebra $\B$ of $\A$ induces an injective map
$\Phi_\B: \Sigma_\B
\longrightarrow \cali$ given by $\Phi_\B(I) =
\cali( I)$ for $I \in \Sigma_\B$.
The inverse map is determined by $\Phi^{-1}_\B(D) = D \cap \B$ for $D \in \cali$.
For all $I,J \in \Sigma_\B$
we have
\begin{eqnarray*}
\Phi_\B (I+J) &=& \Phi_\B (I) + \Phi_\B(J) \qquad \mbox{if } I+J \in \Sigma_\B,\\
\Phi_\B (I \cap J) &=& \Phi_\B (I) \cap \Phi_\B(J) \qquad \mbox{if }\Phi_\B (I) \cap \Phi_\B(J) \in \Phi_\B(\Sigma_\B).
\end{eqnarray*}
\end{theorem}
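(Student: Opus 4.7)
The plan is to reduce everything to establishing the single identity $\cali(I)\cap\B = I$ for $I\in\Sigma_\B$, since this immediately yields both the injectivity of $\Phi_\B$ and the inverse formula $\Phi_\B^{-1}(D) = D\cap\B$. The two lattice-operation clauses will then follow almost formally.

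The main technical step is the case $\B = \A$: given $I\in\Sigma_\A$ I would prove $\cali(I)\cap\A\subseteq I$ (the reverse inclusion being trivial). Starting from $z\in\cali(I)\cap\A$, I choose approximants $z_n\in\Sigma(I)$ written as $z_n = \sum_k \alpha_{k,n}\, x_{k,n} i_{k,n} y_{k,n}$ with $i_{k,n}\in I\subseteq\A$ and $x_{k,n},y_{k,n}\in W$ (possibly empty). Since each $i_{k,n}$ is a linear combination of balance-zero words, the conditional expectation $F$ of Lemma \ref{lemmacondexpect} acts on each summand by the balance indicator $1_{\{\bal(x_{k,n})+\bal(y_{k,n})=0\}}$, and the characterization in Lemma \ref{lemmaLatticaA} ensures every surviving summand lies in $I$, so $F(z_n)\in I$. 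Continuity of $F$ together with $F(z)=z$ (as $z\in\A\subseteq C^*(\A)$) give $z\in\overline{I}$, the closure now taken inside the AF-algebra $C^*(\A)$, whereupon Bratteli's Lemma \ref{bratellilemma} forces $z\in\overline{I}\cap\A = I$. To pass to general $\B$, given $I\in\Sigma_\B$ I set $J := \Sigma(I)\cap\A\in\Sigma_\A$; since $I\subseteq J$ and $J\subseteq\Sigma(I)\subseteq\cali(I)$ the two closed ideals $\cali(I)$ and $\cali(J)$ coincide, so applying the $\A$-case to $J$ gives $\cali(I)\cap\A = \Sigma(I)\cap\A$, and one further intersection with $\B$ yields $\cali(I)\cap\B = \Sigma(I)\cap\B = I$ by Lemma \ref{lemmaIdemIdeal2}.

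With this key identity in hand the lattice parts are essentially formal. For the sum, $\cali(I)+\cali(J)$ is a closed ideal (sums of closed ideals in a $C^*$-algebra are always closed) containing $I+J$, hence contains $\cali(I+J)$, while the reverse inclusion is obvious. For the intersection, the hypothesis provides some $K\in\Sigma_\B$ with $\cali(K) = \cali(I)\cap\cali(J)$, and the already-established injectivity gives $K = \cali(I)\cap\cali(J)\cap\B$; applying $\cali(I)\cap\A = \Sigma(I)\cap\A$ and the analogous identity for $J$ collapses this to $K = \Sigma(I)\cap\Sigma(J)\cap\B = (\Sigma(I)\cap\B)\cap(\Sigma(J)\cap\B) = I\cap J$, so $\Phi_\B(I\cap J) = \cali(K) = \Phi_\B(I)\cap\Phi_\B(J)$.

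The hard part is the key identity $\cali(I)\cap\A = I$ itself: a priori $\cali(I)$ is much larger than the algebraic ideal $\Sigma(I)$ and contains norm limits of arbitrary sums of products formed in all of $C^*(\F/\I)$, so one must convert a $C^*$-level closure back into an $\A$-level closure. The pairing of the faithful conditional expectation $F$ with Bratteli's ideal correspondence for (non-separable) AF-algebras supplies exactly this passage, and nothing else in the argument genuinely relies on the Cuntz--Krieger structure.
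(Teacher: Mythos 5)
Your proposal is correct and follows essentially the same route as the paper: both reduce everything to the identity $\cali(I)\cap\A=I$ for $I\in\Sigma_\A$, proved by pushing approximants from $\Sigma(I)$ through the conditional expectation $F$ of Lemma \ref{lemmacondexpect} and then invoking Bratteli's ideal correspondence (Lemma \ref{bratellilemma}), before passing to general $\B$ via $\Sigma(I)\cap\A$ and Lemma \ref{lemmaIdemIdeal2}. The only cosmetic difference is that you justify $F(z_n)\in I$ via the characterization in Lemma \ref{lemmaLatticaA} where the paper cites Lemma \ref{lemmaIdemIdeal2}; both are valid.
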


\begin{proof}
{\em Step 1.}
At first we are going to check injectivity of $\Phi_\A$.
Let $I \in \Sigma_\A$, and put $D= \cali(I)$.
Then $\overline I \subseteq \overline{D \cap \A}$ (norm-closures in $C^*(\F/\I)$).
To prove the reverse inclusion $\overline{D \cap \A} \subseteq \overline I$,
suppose that $x \in D \cap \A$.
Let $\varepsilon >0$. Since $D = \overline{\Sigma(I)}$,
there is some $y \in \Sigma(I)$ such that $\|x -y\| \le \varepsilon$.
Let $F$ be the conditional expectation of Lemma \ref{lemmacondexpect}.
Since $F x =x$, we have
$$\|x - Fy\| = \|Fx - F y\| \le \|x-y\| \le \varepsilon.$$
Choose for $y$ a representation $y=\sum \alpha_i a_i x_i b_i$ for some scalars $\alpha_i \in \C$,
some (possibly empty) words $a_i,b_i \in W$,
and some elements $x_i \in J$.
Since $\bal(x_i) = 0$, either $F(a_i x_i b_i) = a_i x_i b_i$ or $F(a_i x_i b_i)= 0$.
Hence $Fy = \sum \beta_i a_i x_i b_i \in \A$ for some scalars $\beta_i \in \C$,
and consequently
$F y \in \Sigma(I) \cap \A = I$
by Lemma \ref{lemmaIdemIdeal2}.
Since $\varepsilon >0$ was arbitrary, $x \in \overline I$.

We have proved that $\overline I = \overline{D \cap \A}$, and so $I= D \cap \A$
by Lemma \ref{bratellilemma}.
Hence $\Phi_\A^{-1} \Phi_\A(I) = I$ if we set $\Phi_\A^{-1}(D) = D \cap \A$.
Hence $\Phi_\A$ is injective.

{\em Step 2.}
In this step we will show that $\Phi_\B$ injective.
Define
$\mu:\Sigma_\B \rightarrow \Sigma_\A$ by
$\mu(I)= \Sigma(I) \cap \A$.
The map $\mu$ is injective as $\mu^{-1}(J) = J\cap \B$ is an inverse for $\mu$ by
Lemma \ref{lemmaIdemIdeal2}.
The identity
$$
\Phi_\A (\mu(I)) = \Phi_\A (\Sigma(I) \cap \A )
=
\overline{\Sigma (\Sigma(I) \cap \A ) }
=\overline{\Sigma(I) }
= \Phi_\B(I)
$$
shows that $\Phi_\B = \Phi_\A \mu$, and so $\Phi_\B$ is injective
by the proved injectivity of $\Phi_\A$.
To prove the formula for $\Phi_\B^{-1}$ we note that
$$\Phi_\B^{-1}(D) = \mu^{-1} \Phi_\A^{-1} (D) = (D \cap \A) \cap \B = D \cap \B.$$

{\em Step 3.}
To prove the lattice rules for $\Phi_\B$
we consider
$I_1,I_2 \in \Sigma_\B$ and set $D_1= \Phi_\B(I_1), D_2 =\Phi_\B(I_2)$.
If $D_1 \cap D_2 \in \Phi_\B(\Sigma_\B)$ then
$$\Phi^{-1}_\B(D_1 \cap D_2) = \Phi^{-1}_\B(D_1) \cap \Phi^{-1}_\B(D_2) = I_1 \cap I_2,$$
which shows $D_1 \cap D_2 = \Phi_\B(I_1 \cap I_2)$.
If $I_1 + I_2 \in \Sigma_\B$ then
$$
\Phi_\B (I_1 + I_2) = \overline{\Sigma(I_1 + I_2)}
= \overline{ \Sigma (D_1 + D_2 )}
= D_1 + D_2.$$
\end{proof}

We need a lemma which is often used in the theory of Cuntz--Krieger type algebras.

\begin{lemma}  \label{lemmaInvarianceGauge}
Let $J$ be a subset of $\A$.
Then the gauge actions exist on $(\F/\I)/\Sigma(J)$, so (A) is satisfied for the same $H$.
One has $\bal(q_J(x)) = \bal(x)$ for all words $x \in W$ with $q_J(x) \neq 0$.
If $\pi$ is a representation of $\F/\I$, $X$ a linear subspace of $\A$ and $J := \ker(\pi|_X)$
then the representation $\tilde \pi$ induced by $\pi$ by dividing out $J$ is injective on $q_J(X)$ ($\pi = \tilde \pi q_J$).
\end{lemma}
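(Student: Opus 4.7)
The plan is to dispatch the three assertions in order; all three rest on the single observation that $J \subseteq \A$ lies in the fixed-point set of every $t_\lambda$, since every word spanning $\A$ has balance zero and thus satisfies $t_\lambda(w) = \bal(w)(\lambda) \, w = w$.

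For the first assertion, I would verify that $\Sigma(J)$ is $t_\lambda$-invariant for each $\lambda \in H$: a generic generator $x j y$ of $\Sigma(J)$ (with $j \in J$ and $x,y \in W$) satisfies $t_\lambda(x j y) = \bal(x)(\lambda) \cdot \bal(y)(\lambda) \cdot x j y \in \Sigma(J)$, because $t_\lambda$ fixes $j$ and multiplies $x,y$ by scalars in $\T$. Hence $t_\lambda$ descends to an automorphism $\tilde t_\lambda$ of $(\F/\I)/\Sigma(J)$; the map $\lambda \mapsto \tilde t_\lambda$ is a homomorphism because $t$ is, its action on the generators satisfies $\tilde t_\lambda(q_J(a)) = \lambda_a \, q_J(a)$, and so property~(A) holds for the quotient system with the same group $H$.

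For the balance-preservation statement, the induced gauge action determines the balance function $\bal'$ on $(\F/\I)/\Sigma(J)$ through $\tilde t_\lambda(q_J(x)) = \bal'(q_J(x))(\lambda) \cdot q_J(x)$ on nonzero words. Comparing this with $\tilde t_\lambda(q_J(x)) = q_J(t_\lambda(x)) = \bal(x)(\lambda) \cdot q_J(x)$ and cancelling the nonzero $q_J(x)$ yields $\bal'(q_J(x)) = \bal(x)$, as required.

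For the third assertion, since $J \subseteq \ker(\pi|_X) \subseteq \ker \pi$ and $\ker \pi$ is a self-adjoint two-sided ideal, $\Sigma(J) \subseteq \ker \pi$, so $\pi$ factors uniquely as $\tilde \pi \circ q_J$. If $\tilde \pi(q_J(x)) = 0$ for some $x \in X$, then $\pi(x) = 0$, whence $x \in \ker(\pi|_X) = J$ and therefore $q_J(x) = 0$. There is no real obstacle in this lemma; the argument is a bookkeeping exercise, and the only care required is to distinguish the subspace $J \subseteq \A$ from the ideal $\Sigma(J)$ that is actually being divided out in the notation $q_J$.
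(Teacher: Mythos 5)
Your proposal is correct and follows essentially the same route as the paper: use that $\A$ is fixed pointwise by the gauge action to see that the generators $xjy$ of $\Sigma(J)$ are scaled by unimodular constants, hence $\Sigma(J)$ is invariant and $t_\lambda$ descends, and then the factorisation/injectivity argument for $\tilde\pi$ is the same chain of implications. Your explicit verification of the balance identity and of $\Sigma(J)\subseteq\ker\pi$ merely spells out steps the paper leaves implicit.
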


\begin{proof}
It is well known that $\A$ is the fixed point algebra of the gauge action $t$.
Hence, $t_\lambda(j) = j$ for $j \in J$ and $\lambda \in H$ since $J \subseteq \A =\lin(W_0)$.
Since an $x \in \Sigma(J)$ allows a representation $x=\sum_i \alpha_i a_i j_i b_i$ for scalars $\alpha_i \in \C$,
(possibly empty) words $a_i , b_i \in W$, and elements $j_i \in J$, this shows that
$t_\lambda(\Sigma(J)) \subseteq \Sigma(J)$ ($\lambda \in H$).
Hence the gauge actions exist on $(\F/\I)/\Sigma(J)$.
For the last claim, if $\tilde \pi(q_J(x)) = 0$ for $x \in X$, then $\pi(x) = 0$, then $x \in \ker(\pi|_X)$,
then $x \in J$, then $q_J(x) = 0$, showing that $\tilde \pi$ is injective on $q_J(X)$.
\end{proof}

\begin{definition} \label{DefCancelableFirst}
{\rm
An ideal $I \in \Sigma_\A$ is called cancelling if
$\F/\I$ divided by $I$ satisfies property (C').
}
\end{definition}

The proof of the next theorem will reveal that $I$ is cancelling if and only if $\F/\I$ divided by $I$ is a Cuntz--Krieger type $*$-algebra.
Write $\Omega_\A \subseteq \Sigma_\A$ for the family of all cancelling ideals.

\begin{theorem} \label{idealspropertyC}
We have $\Phi_\A(\Omega_\A)= \set{ D \in \cali}{D \cap \A \in \Omega_\A}$.
\end{theorem}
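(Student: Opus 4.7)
The plan is to establish the two inclusions of the claimed equality separately. The forward inclusion $\Phi_\A(\Omega_\A)\subseteq\{D\in\cali:D\cap\A\in\Omega_\A\}$ is immediate from Theorem \ref{injectlattice}: for $I\in\Omega_\A$ and $D:=\Phi_\A(I)$ one has $D\cap\A=\Phi_\A^{-1}(D)=I\in\Omega_\A$.

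The reverse inclusion is the substantive direction. Given $D\in\cali$ with $I:=D\cap\A\in\Omega_\A$, the goal is to show $D=\cali(I)=\Phi_\A(I)$. Since $I\subseteq D$ gives $\cali(I)\subseteq D$ for free, the task reduces to showing $D\subseteq\cali(I)$. My strategy is to identify \emph{both} $C^*(\F/\I)/\cali(I)$ and $C^*(\F/\I)/D$ with the universal $C^*$-algebra of the quotient system $(\cala,\F,\I',H)$, where $\F/\I':=(\F/\I)/\Sigma(I)$. Once this is done, the natural surjection $q:C^*(\F/\I)/\cali(I)\to C^*(\F/\I)/D$ induced by $\cali(I)\subseteq D$ intertwines the two universal identifications, hence is forced to be an isomorphism, yielding $\cali(I)=D$.

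To carry this out, I would first verify that $\F/\I'$ is a Cuntz--Krieger type $*$-algebra. Property (A) is furnished by Lemma \ref{lemmaInvarianceGauge}; property (C') is exactly the cancelling hypothesis on $I$; and property (B) reduces to observing that the new zero-balanced part is $\A'\cong\A/I$ (since $\Sigma(I)\cap\A=I$ by Lemma \ref{lemmaIdemIdeal2} and balance is preserved under $q_I$ by Lemma \ref{lemmaInvarianceGauge}), which inherits local matriciality from $\A$ via the standard argument of lifting finitely many elements to a finite-dimensional $C^*$-subalgebra $\calm\subseteq\A$ and passing to $\calm/(\calm\cap I)$. An $\A'$-faithful representation of $\F/\I'$ is then supplied either by the induced map $\tilde\pi:\F/\I'\to C^*(\F/\I)/D$ or by $\tilde\zeta:\F/\I'\to C^*(\F/\I)/\cali(I)$; $\A'$-faithfulness of each follows from Lemma \ref{lemmaInvarianceGauge} together with the identities $D\cap\A=I$ (hypothesis) and $\cali(I)\cap\A=I$ (Theorem \ref{injectlattice}).

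Since both $\tilde\pi$ and $\tilde\zeta$ have dense image, the Cuntz--Krieger uniqueness theorem (Theorem \ref{CKuniqueness}) delivers the identifications $C^*(\F/\I)/D\cong C^*(\F/\I')\cong C^*(\F/\I)/\cali(I)$, and because these isomorphisms agree with $q$ on the dense image of $\F/\I'$, the map $q$ is an isomorphism, so $D=\cali(I)$. The main obstacle I anticipate is the bookkeeping required to confirm that $\F/\I'$ genuinely satisfies all axioms of a Cuntz--Krieger type $*$-algebra, most notably property (B) for $\A/I$ and the $\A'$-faithfulness of the induced representations; once this is secured, Cuntz--Krieger uniqueness does the rest.
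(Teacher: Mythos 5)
Your proposal is correct and follows essentially the same route as the paper: both arguments pass to the quotient $(\F/\I)/\Sigma(I)$, verify it is a Cuntz--Krieger type $*$-algebra (using Lemma \ref{lemmaInvarianceGauge} for the gauge action and core, and the cancelling hypothesis for (C')), and then invoke the uniqueness theorem, Theorem \ref{CKuniqueness}, on the two induced representations into $C^*(\F/\I)/\cali(I)$ and $C^*(\F/\I)/D$ to force the connecting surjection to be an isomorphism, giving $D=\cali(I)$. Your explicit checks of property (B) for $\A/I$ and of $\A$-faithfulness on both sides are only spelled out slightly more fully than in the paper's proof, which leaves them implicit.
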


\begin{proof}
Define $\calj=\set{ D \in \cali}{D \cap \A \in \Omega_\A}$.
To prove $\Phi_A(\Omega_\A) \subseteq \calj$, consider an element $I \in \Omega_\A$,
and note that $\Phi_\A^{-1} (\Phi_\A(I)) = I = \Phi_\A(I) \cap \A \in \Omega_\A$
by Theorem \ref{injectlattice}.
Hence $\Phi_\A(I) \in \calj$.

To prove $\calj \subseteq \Phi_A(\Omega_\A)$ consider an element
$D \in \calj$.
Define $J= \Sigma(D\cap \A)$.
Write
$\pi:\F/\I
\longrightarrow C^*(\F/\I)/D$ for the canonical quotient map.
Write $C^*(J)$ for the norm closure of $J$ in $C^*(\F/\I)$.
Since $C^*(J) \subseteq D$, $\pi$ induces a homomorphism $\tilde \pi: (\F/\I)/J \longrightarrow C^*(\F/\I)/D$.
There is also a canonical homomorphism $\sigma: (\F/\I) /J \longrightarrow C^*(\F/\I)/ C^*(J)$.
Hence, by introducing a further quotient map $\lambda$,
we obtain a commutative diagram
$$
\begin{xy}
\xymatrix{
(\F/\I)/ J \ar[r]^{\tilde \pi} \ar[dr]_{\sigma} & C^*(\F/\I)/D  \\
  & C^*(\F/\I) / C^*(J) \ar[u]_\lambda
}
\end{xy}
$$

Since $D \cap \A = \ker(\pi|_\A)$, by Lemma \ref{lemmaInvarianceGauge} the algebra $(\F/\I)/J$ is invariant under the gauge
actions and $\tilde \pi$ is injective on $q_J(\A)$, which is the new core ``$\A$" for the algebra $(\F/\I)/J$
since $\bal(q_J(x))=\bal(x)$.
So $(\F/\I)/J$ is an algebra which satisfies (A) and (B), and there exists an $\A$-faithful $C^*$-representation $\tilde \pi$.
Since $J$ is generated by the cancelling ideal $D \cap \A \in \Omega_\A$, by Definition \ref{DefCancelableFirst} $(\F/\I)/J$ satisfies also (C') and so is a Cuntz--Krieger
$*$-algebra.

Hence, by Theorem \ref{CKuniqueness} the images of $\tilde \pi$ and $\sigma$ are canonically isomorphic, and so
$\lambda$ is proved to be an isomorphism.
By the definition of $\lambda$ this implies $C^*(J) = D$.
Since $D \in \calj$, $D \cap \A \in \Omega_\A$, and so
$D = C^*(J)= \Phi_\A(D\cap\A) \in \Phi_\A(\Omega_\A)$ as we wanted to show.
\end{proof}

\begin{corollary} \label{corollaryIdealsIso}
If all ideals in $\Sigma_\A$ are cancelling then $\Phi_\A$ is a lattice isomorphism.
\end{corollary}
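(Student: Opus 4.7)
The plan is to use Theorem \ref{idealspropertyC} to upgrade the injection $\Phi_\A$ of Theorem \ref{injectlattice} to a bijection, and then to re-invoke Theorem \ref{injectlattice} for the lattice identities.

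First I would establish surjectivity of $\Phi_\A : \Sigma_\A \longrightarrow \cali$. Under the hypothesis $\Omega_\A = \Sigma_\A$, Theorem \ref{idealspropertyC} reads
$$\Phi_\A(\Sigma_\A) \;=\; \set{D \in \cali}{D \cap \A \in \Sigma_\A},$$
so it suffices to verify that $D \cap \A \in \Sigma_\A$ for every $D \in \cali$. The set $J := D \cap (\F/\I)$ is a self-adjoint two-sided ideal of $\F/\I$ (since $\F/\I$ is a $*$-subalgebra of $C^*(\F/\I)$ and $D$ is a closed two-sided ideal), and $D \cap \A = J \cap \A$ then lies in $\Sigma_\A$ by its very definition. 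Combined with the injectivity already recorded in Theorem \ref{injectlattice}, this makes $\Phi_\A$ a bijection with inverse $D \mapsto D \cap \A$.

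Next I would verify the two lattice rules. For any $I_1, I_2 \in \Sigma_\A$, Lemma \ref{lemmaLatticaA} gives $I_1 + I_2 \in \Sigma_\A$, so the first conditional identity in Theorem \ref{injectlattice} applies and yields $\Phi_\A(I_1 + I_2) = \Phi_\A(I_1) + \Phi_\A(I_2)$. For intersections, the surjectivity just established makes $\Phi_\A(I_1) \cap \Phi_\A(I_2) \in \cali = \Phi_\A(\Sigma_\A)$ automatic, so the second conditional identity fires and produces $\Phi_\A(I_1 \cap I_2) = \Phi_\A(I_1) \cap \Phi_\A(I_2)$, with $I_1 \cap I_2 \in \Sigma_\A$ as a by-product (which is anyway visible from the characterisation in Lemma \ref{lemmaLatticaA}, since $x(I_1 \cap I_2)y \subseteq (xI_1y) \cap (xI_2y) \subseteq I_1 \cap I_2$ when $\bal(x)+\bal(y)=0$).

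There is no genuine obstacle: the hypothesis \emph{every ideal in $\Sigma_\A$ is cancelling} is tailored precisely so that the pre-existing bijection $\Sigma_\A \leftrightarrow \Phi_\A(\Sigma_\A)$ becomes a bijection onto all of $\cali$, after which the conditional lattice identities of Theorem \ref{injectlattice} trigger automatically from the closure of $\Sigma_\A$ under sums. The only micro-observation requiring any attention is that $D \cap \A \in \Sigma_\A$ for arbitrary $D \in \cali$, which is immediate from the definitions.
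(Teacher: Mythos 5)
Your proof is correct and follows the same route as the paper's: Theorem \ref{idealspropertyC} (with $\Omega_\A = \Sigma_\A$) for surjectivity, and Theorem \ref{injectlattice} together with Lemma \ref{lemmaLatticaA} for injectivity and the two lattice identities. The only difference is that you make explicit the step the paper leaves implicit, namely that $D \cap \A \in \Sigma_\A$ for every $D \in \cali$ (via $D \cap (\F/\I) \in \Sigma$), which is indeed exactly what is needed to read off surjectivity from Theorem \ref{idealspropertyC}.
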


\begin{proof}
Since all ideals in $\Sigma_\A$ are cancelling, $\Omega_\A = \Sigma_\A$.
By Theorem \ref{idealspropertyC}, $\Phi_\A$ is surjective.
By Theorem \ref{injectlattice} and Lemma \ref{lemmaLatticaA}, $\Phi_\A$ is an injective lattice homomorphism.
\end{proof}

We aim to generalise the last theorem by allowing $\A$ to be a smaller algebra $\B$.
The next definition will become clear in Corollary \ref{corollarySemigraphIsomorphPhiB} below.

\begin{definition} \label{DefCancelable}
{\rm
An ideal $I \in \Sigma_\B$ is called $\B$-cancelling if $X:=(\F/\I)/\Sigma(I)$ satisfies property (C'),
and every arbitrarily given $C^*$-representation of $X$ is injective on $q_I(\A)$ if and only if it is injective on $q_I(\B)$.
}
\end{definition}

Note that cancelling is the same as $\A$-cancelling.
Write $\Omega_\B \subseteq \Sigma_\B$ for the family of $\B$-cancelling ideals.
The next theorem and corollary generalise the last ones.

\begin{theorem} \label{idealspropertyCnumber2}
We have $\Phi_\B(\Omega_\B)= \set{ D \in \cali}{D \cap \B \in \Omega_\B}$.
\end{theorem}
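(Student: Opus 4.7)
My plan is to follow the blueprint of Theorem \ref{idealspropertyC} closely, substituting $\B$ for $\A$ throughout and invoking the second clause of Definition \ref{DefCancelable} at a single decisive moment. The forward inclusion $\Phi_\B(\Omega_\B) \subseteq \{D \in \cali : D \cap \B \in \Omega_\B\}$ is a formal consequence of Theorem \ref{injectlattice}: given $I \in \Omega_\B$ and $D := \Phi_\B(I)$, one immediately has $D \cap \B = \Phi_\B^{-1}(D) = I \in \Omega_\B$.

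For the reverse inclusion, I would take $D \in \cali$ with $I := D \cap \B \in \Omega_\B$ and set $J := \Sigma(I)$, so that $J \subseteq D$ and $C^*(J) \subseteq D$ (the latter because $D$ is closed). As in the proof of Theorem \ref{idealspropertyC}, the canonical quotient map $\pi : \F/\I \longrightarrow C^*(\F/\I)/D$ factors through $\tilde \pi : (\F/\I)/J \longrightarrow C^*(\F/\I)/D$ and $\sigma : (\F/\I)/J \longrightarrow C^*(\F/\I)/C^*(J)$, with $\tilde \pi = \lambda \sigma$ for the natural quotient $\lambda : C^*(\F/\I)/C^*(J) \longrightarrow C^*(\F/\I)/D$. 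The target is to show that $(\F/\I)/J$ is a Cuntz--Krieger type $*$-algebra for which $\tilde \pi$ is a $q_J(\A)$-faithful representation; Theorem \ref{CKuniqueness} then forces $\lambda$ to be an isomorphism, which yields $C^*(J) = D$ and hence $D = \Phi_\B(I) \in \Phi_\B(\Omega_\B)$.

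As for the structural properties of $(\F/\I)/J$: property (A) and preservation of $\bal$ on nonzero words of $q_J(W)$ follow from Lemma \ref{lemmaInvarianceGauge}; property (B) transfers to $q_J(\A)$ because any finite-dimensional $C^*$-subalgebra $M \subseteq \A$ quotiented by the self-adjoint ideal $M \cap J$ remains a finite-dimensional $C^*$-algebra sitting inside $q_J(\A)$; and property (C') is built into the definition of $\B$-cancelling. The only delicate point is $q_J(\A)$-faithfulness of $\tilde \pi$. Since $\ker(\pi|_\B) = D \cap \B = I$, Lemma \ref{lemmaInvarianceGauge} applied with $X = \B$ only yields injectivity of $\tilde \pi$ on $q_I(\B) = q_J(\B)$. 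The promotion from $q_J(\B)$-faithfulness to $q_J(\A)$-faithfulness is supplied precisely by the second clause of Definition \ref{DefCancelable}, which was engineered for this purpose.

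Once $\tilde \pi$ is known to be $q_J(\A)$-faithful, the factorization $\tilde \pi = \lambda \sigma$ implies that $\sigma$ is $q_J(\A)$-faithful as well. Applying Theorem \ref{CKuniqueness} to both representations identifies their closed images canonically, so $\lambda$ is an isomorphism and $C^*(J) = D$, as required. The conceptual heart of the argument is the gap-bridging role of Definition \ref{DefCancelable}; everything else is a routine transfer from the $\A$-cancelling case of Theorem \ref{idealspropertyC}, so I expect the $\B$-cancelling hypothesis to do essentially all of the real work, while verifying local matriciality of the new core and the commutativity of the diagram are straightforward.
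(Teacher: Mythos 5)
Your proposal is correct and follows essentially the same route as the paper, whose proof of this theorem is literally ``proved exactly like Theorem \ref{idealspropertyC}, replacing $\A$ by $\B$ and $\Omega_\A$ by $\Omega_\B$.'' You have in fact done slightly more than the paper by explicitly locating the one non-mechanical step in that replacement --- the promotion from $q_J(\B)$-faithfulness (which is all Lemma \ref{lemmaInvarianceGauge} gives) to the $q_J(\A)$-faithfulness required by Theorem \ref{CKuniqueness}, via the second clause of Definition \ref{DefCancelable} --- which is exactly why that clause appears in the definition of $\B$-cancelling.
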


\begin{proof}
This is proved exactly like Theorem \ref{idealspropertyC}.
One just replaces $\A$ by $\B$ and $\Omega_\A$ by
$\Omega_\B$ everywhere.
\end{proof}

\begin{corollary} \label{corollaryIdealsIso2}
If all ideals in $\Sigma_\B$ are $\B$-cancelling then $\Phi_\B$ is a bijection.
\end{corollary}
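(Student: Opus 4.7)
The plan is to mirror the proof of Corollary \ref{corollaryIdealsIso} almost verbatim, replacing $\A$ by $\B$ throughout and invoking Theorem \ref{idealspropertyCnumber2} in place of Theorem \ref{idealspropertyC}. Injectivity of $\Phi_\B$ is free from Theorem \ref{injectlattice}, so the entire task reduces to surjectivity onto $\cali$.

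For surjectivity, I would first unpack the hypothesis: since all ideals in $\Sigma_\B$ are $\B$-cancelling, $\Omega_\B = \Sigma_\B$, so Theorem \ref{idealspropertyCnumber2} says
$$\Phi_\B(\Sigma_\B) \;=\; \set{D \in \cali}{D \cap \B \in \Sigma_\B}.$$
Hence it suffices to verify that $D \cap \B \in \Sigma_\B$ for every closed two-sided ideal $D$ in $C^*(\F/\I)$. Recall that we are working under the standing assumption that $\zeta$ is injective (Lemma \ref{lemmaInjectZeta}), so $\F/\I$ is identified with a $*$-subalgebra of $C^*(\F/\I)$.

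Given $D \in \cali$, set $J := D \cap (\F/\I)$. Since $D$ is closed under the $*$-operation and two-sided multiplication inside $C^*(\F/\I)$, the intersection $J$ is a self-adjoint two-sided ideal of $\F/\I$, i.e.\ $J \in \Sigma$. Because $\B \subseteq \F/\I$, one has $D \cap \B = (D \cap \F/\I) \cap \B = J \cap \B$, which lies in $\Sigma_\B$ by the very definition of the latter set. Therefore $\Phi_\B(\Sigma_\B) = \cali$, giving surjectivity.

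Combining surjectivity with the injectivity already established in Theorem \ref{injectlattice} yields the bijection. I do not anticipate any real obstacle here: the argument is purely formal bookkeeping on top of Theorem \ref{idealspropertyCnumber2}, which does the genuine work. Note that, unlike Corollary \ref{corollaryIdealsIso}, we do not claim a lattice isomorphism, since the lattice formulas in Theorem \ref{injectlattice} are only conditional on the relevant sums/intersections lying in $\Sigma_\B$ or $\Phi_\B(\Sigma_\B)$, and no analogue of Lemma \ref{lemmaLatticaA} is available for a general subalgebra $\B$.
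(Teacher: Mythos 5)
Your proof is correct and follows the same route as the paper: $\Omega_\B=\Sigma_\B$, surjectivity from Theorem \ref{idealspropertyCnumber2}, injectivity from Theorem \ref{injectlattice}. The only difference is that you explicitly verify the step the paper leaves implicit, namely that $D\cap\B=(D\cap\F/\I)\cap\B\in\Sigma_\B$ for every $D\in\cali$ (using that closed ideals of a $C^*$-algebra are automatically self-adjoint), which is a worthwhile clarification.
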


\begin{proof}
Since all ideals in $\Sigma_\B$ are $\B$-cancelling, $\Omega_\B = \Sigma_\B$.
By Theorem \ref{idealspropertyCnumber2} $\Phi_\B$ is surjective and
by Theorem \ref{injectlattice} $\Phi_\B$ is injective.
\end{proof}

We shall now apply the last corollary to cancelling higher rank
semigraph algebras \cite{burgiSemigraphI}.

\begin{corollary}  \label{corollarySemigraphIsomorphPhiB}
Let $\F/\I$ be a cancelling semigraph algebra (see \cite[Definitions 5.1 and 7.2]{burgiSemigraphI}),
and $\B$ the $*$-subalgebra of $\A$ generated by the standard projections (see \cite[Definition 5.14]{burgiSemigraphI}).
Then every quotient of $\F/\I$ by an ideal in $\Sigma_\B$ is a semigraph algebra by \cite[Lemma 8.1]{burgiSemigraphI}.
Now if every such quotient is cancelling (as a semigraph algebra), then $\Phi_\B$ is a bijection.
\end{corollary}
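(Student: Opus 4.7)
The plan is to apply Corollary \ref{corollaryIdealsIso2}, so the task reduces to showing that every ideal $I\in\Sigma_\B$ is $\B$-cancelling in the sense of Definition \ref{DefCancelable}. Fixing such an $I$ and setting $X=(\F/\I)/\Sigma(I)$, this in turn means verifying (i) that $X$ satisfies property (C'), and (ii) that any $C^*$-representation of $X$ is injective on $q_I(\A)$ if and only if it is injective on $q_I(\B)$.

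Step (i) will be almost immediate. The hypotheses of the corollary hand me a semigraph algebra structure on $X$ via Lemma 8.1 of \cite{burgiSemigraphI}, and moreover state that this quotient is again cancelling. Hence $X$ is a cancelling semigraph algebra, and a fortiori a Cuntz--Krieger type $*$-algebra, so in particular satisfies (C').

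For step (ii), the forward direction is trivial since $q_I(\B)\subseteq q_I(\A)$; the content is in the converse. Here I would exploit two structural facts about the quotient semigraph algebra $X$. First, Lemma \ref{lemmaInvarianceGauge} identifies $q_I(\A)$ as the core of $X$, which by property (B) is locally matricial; on such an algebra, a $*$-homomorphism is injective exactly when it does not vanish on any nonzero projection. Second, \cite[Corollary 6.4]{burgiSemigraphI}, applied now to $X$, says that every nonzero projection in its core dominates a standard projection of $X$ in Murray--von Neumann order, and these standard projections -- being nonzero images under $q_I$ of the standard projections generating $\B$ -- sit inside $q_I(\B)$. Putting the two facts together, a representation faithful on $q_I(\B)$ is non-vanishing on every nonzero projection of $q_I(\A)$, and therefore injective on $q_I(\A)$.

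Granted (i) and (ii), all ideals in $\Sigma_\B$ are $\B$-cancelling, so $\Omega_\B=\Sigma_\B$ and Corollary \ref{corollaryIdealsIso2} delivers the bijectivity of $\Phi_\B$. The only real obstacle is the bookkeeping in step (ii): one must check that Lemma 8.1 of \cite{burgiSemigraphI} not only produces a semigraph algebra from the quotient but also identifies its standard projections as the nonzero images of the original ones under $q_I$. With this identification in hand, the Murray--von Neumann domination step reduces to a direct citation of \cite[Corollary 6.4]{burgiSemigraphI}, and the rest of the argument is routine.
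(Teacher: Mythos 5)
Your proposal is correct and follows essentially the same route as the paper: reduce to Corollary \ref{corollaryIdealsIso2} by showing every $I\in\Sigma_\B$ is $\B$-cancelling, using \cite[Lemma 8.1]{burgiSemigraphI} to give the quotient a semigraph structure whose standard projections are the images of the original ones, and \cite[Corollary 6.4]{burgiSemigraphI} to transfer faithfulness between $q_I(\B)$ and $q_I(\A)$. The only difference is cosmetic: you unpack the Murray--von Neumann domination argument behind the ``faithful on $\B$ iff faithful on $\A$'' equivalence, whereas the paper cites it directly from \cite[Corollary 6.4]{burgiSemigraphI}.
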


\begin{proof}
A $C^*$-representation of a cancelling semigraph algebra is injective on $\A$ if and only it is injective on $\B$
by \cite[Corollary 6.4]{burgiSemigraphI}.
If $I$ is an ideal in $\Sigma_\B$, then the image of $q_I$ is a semigraph algebra by \cite[Lemma 8.1]{burgiSemigraphI}.
The set of standard projections (see \cite[Definition 5.14]{burgiSemigraphI}) in the semigraph algebra $q_I(\F/\I)$ are the image of the standard projections
in $\F/\I$; so $q_I(\B)$ is the $*$-algebra generated by the standard projections in $q_I(\F/\I)$.
Note also that $q_I(\A)$ is the core, or the ``$\A$", of $q_I(\F/\I)$. Hence by \cite[Corollary 6.4]{burgiSemigraphI},
a $C^*$-representation of $q_I(\F/\I)$ is injective on $q_I(\A)$ if and only if it is injective on $q_I(\B)$.
So if we assume that $q_I(\F/\I)$ is cancelling (as a semigraph algebra), then it is a Cuntz--Krieger type $*$-algebra,
and so satisfies (C'), and by Definition \ref{DefCancelable} $I$ is $\B$-cancelling.

So if we assume that $q_I(\F/\I)$ is cancelling for every $I \in \Sigma_\B$, then $\Sigma_\B$ conists of $\B$-cancelling ideals only,
and so $\Sigma_\B = \Omega_\B$. The claim follows thus by Corollary \ref{corollaryIdealsIso2}.
\end{proof}

\begin{corollary}   \label{corollarySemigraphIsomorphPhiB2}
If every quotient of a cancelling semigraph algebra $\F/\I$ by an ideal in $\Sigma_\A$ is cancelling (as a semigraph algebra), then
$\Phi_\A$ is a lattice isomorphism.
\end{corollary}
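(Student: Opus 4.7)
The plan is to reduce the statement to Corollary \ref{corollaryIdealsIso} by showing that the hypothesis forces every ideal in $\Sigma_\A$ to be cancelling in the sense of Definition \ref{DefCancelableFirst}, i.e.\ that $\Omega_\A = \Sigma_\A$. Once this equality is established, Corollary \ref{corollaryIdealsIso} immediately yields that $\Phi_\A$ is a lattice isomorphism, so no further work on the lattice side is needed.

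To verify $\Omega_\A = \Sigma_\A$, fix an arbitrary $I \in \Sigma_\A$ and form the quotient $(\F/\I)/\Sigma(I)$. By \cite[Lemma 8.1]{burgiSemigraphI}, precisely as used in the proof of Corollary \ref{corollarySemigraphIsomorphPhiB}, this quotient is again a semigraph algebra (via the natural map $q_{\Sigma(I)}$). By the standing hypothesis it is moreover cancelling. Now invoke the fact, recalled in the proof of Corollary \ref{corollaryPurelyInfiniteSemigraph}, that every cancelling semigraph algebra is a Cuntz--Krieger type $*$-algebra; in particular, it satisfies property (C'). By Definition \ref{DefCancelableFirst} this is exactly the statement that $I$ is cancelling, so $I \in \Omega_\A$. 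Since $I$ was arbitrary, $\Omega_\A = \Sigma_\A$.

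The proof is then completed by applying Corollary \ref{corollaryIdealsIso}, which asserts that when all ideals in $\Sigma_\A$ are cancelling, $\Phi_\A$ is a lattice isomorphism. I do not expect a serious obstacle: the only subtle point is matching vocabulary, namely that the word ``cancelling'' for semigraph algebras (in the sense of \cite[Definitions 5.1 and 7.2]{burgiSemigraphI}) is exactly the condition ensuring that the quotient belongs to the class of Cuntz--Krieger type $*$-algebras of this note, so that property (C') is available. Once this identification is in place, the argument is a one-line reduction to the previously proved Corollary \ref{corollaryIdealsIso}.
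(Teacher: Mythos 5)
Your proposal is correct and follows essentially the same route as the paper: the paper's proof simply repeats the final sentences of the proof of Corollary \ref{corollarySemigraphIsomorphPhiB} with $\B$ replaced by $\A$, i.e.\ it deduces from the hypothesis that every quotient $q_I(\F/\I)$ is a cancelling semigraph algebra, hence a Cuntz--Krieger type $*$-algebra satisfying (C'), so that $\Sigma_\A = \Omega_\A$, and then invokes Corollary \ref{corollaryIdealsIso}. Your write-up matches this step for step (and correctly notes that only the (C') clause of the cancelling condition is needed here, since the $\A$-versus-$\B$ injectivity clause is vacuous when $\B=\A$).
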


\begin{proof}
One repeats the last three sentences of the proof of Corollary \ref{corollarySemigraphIsomorphPhiB} and replaces $\B$ by $\A$ everwhere.
\end{proof}

\section{Crossed Product Representation and Nuclearity} \label{section5}

By using the Cuntz--Krieger uniqueness theorem, Theorem \ref{CKuniqueness}, we can extend
each gauge action $t_\lambda \in \aut(\F/\I)$ to a gauge actions $\theta_\lambda \in \aut(C^*(\F/\I))$
($\lambda \in H$).
We may thus apply Takai's duality theorem \cite{0295.46088} and obtain the following result.

\begin{theorem} \label{takaidualityapplication}
By Takai's duality theorem we have
$$C^*(\F/\I) \otimes \calk(L^2(\calh)) \cong C^*(\F/\I) \rtimes_\theta H \rtimes_{\widehat \theta} \widehat H.$$
Moreover, $C^*(\F/\I) \rtimes_\theta H$
is the norm closure of a locally matricial algebra.
Hence $C^*(\F/\I)$ is nuclear.
\end{theorem}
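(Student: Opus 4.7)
The plan is to handle the three assertions in sequence: extending the gauge action to $C^*(\F/\I)$ and applying Takai duality for the first; exhibiting the compact-group crossed product as AF for the second; and combining these for nuclearity.

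For the Takai isomorphism I would first extend each gauge automorphism $t_\lambda$ from $\F/\I$ to $C^*(\F/\I)$. Because every word in $\A$ is zero-balanced, $t_\lambda|_\A = \id_\A$, so $\zeta \circ t_\lambda$ is $\A$-faithful and by Theorem \ref{CKuniqueness} extends to an automorphism $\theta_\lambda$ of $C^*(\F/\I)$; point-norm continuity of $\lambda \mapsto \theta_\lambda$ is immediate from the formula $\theta_\lambda(\zeta(w)) = \bal(w)(\lambda)\,\zeta(w)$ on words. With $\theta: H \to \aut(C^*(\F/\I))$ in hand, Takai's duality \cite{0295.46088} yields the displayed isomorphism directly.

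For the second and main claim I would exploit that $H \subseteq \T^\cala$ is compact, so $\hat H$ is discrete, and work with the spectral projections
\[
V_n \; := \; \int_H n(\lambda)\, u_\lambda\, d\lambda \;\in\; M\bigl(C^*(\F/\I) \rtimes_\theta H\bigr), \qquad n \in \hat H,
\]
where $u_\lambda$ is the canonical unitary. A direct computation using $\theta_\lambda(a) = k(\lambda) a$ for $a$ in the spectral subspace $A_k := \overline{\lin(W_k)} \subseteq C^*(\F/\I)$ shows that the $V_n$ are mutually orthogonal projections summing strictly to $1$, and that $V_n (C^*(\F/\I) \rtimes_\theta H) V_m = A_{m-n} V_m$. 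This exhibits the crossed product as a ``$\hat H \times \hat H$--matrix algebra'' whose diagonal corners are canonically isomorphic to the fixed-point algebra $A_0 = \overline{\A}$, which is AF by property (B). Given finitely many elements of $C^*(\F/\I)\rtimes_\theta H$ and $\varepsilon > 0$, I would approximate each first by a finite Fourier sum $\sum_{m \in F} a_m V_m$ with $F \subset \hat H$ finite and $a_m$ a finite linear combination of words, then absorb all the source/range projections $w^* w$, $w w^*$ of the appearing words into a single finite dimensional subalgebra $\calm \subseteq \A$ by property (B), and finally check that the elements $V_n x V_m$ with $x \in \calm$ and $n,m$ in a suitable finite subset of $\hat H$ close up under multiplication and adjoint into a single finite-dimensional $C^*$-subalgebra of the crossed product containing the approximants. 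The main obstacle is this last closure step: one must choose $\calm$ and the finite index set in $\hat H$ carefully enough that products $V_n x V_m \cdot V_m y V_{m'} = V_n\,xy\,V_{m'}$ remain within the span of $\{V_n z V_{m'} : z \in \calm\}$, so that the partial-isometry structure of words meshes properly with the AF inductive limit of $\A$.

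For nuclearity, the norm closure of a locally matricial $*$-algebra is AF, hence nuclear; the crossed product of a nuclear $C^*$-algebra by a discrete amenable group is nuclear, and $\hat H$ is discrete abelian and so amenable; therefore the right-hand side of the Takai isomorphism is nuclear, and so $C^*(\F/\I) \otimes \calk$ is nuclear. Since nuclearity passes to full hereditary subalgebras, $C^*(\F/\I)$ itself is nuclear.
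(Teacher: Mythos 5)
Your first and third steps match the paper: the gauge action is extended to $C^*(\F/\I)$ via Theorem \ref{CKuniqueness}, Takai duality gives the displayed isomorphism, and nuclearity follows because $C^*(\F/\I)$ sits as a (hereditary) corner of a crossed product of an AF-algebra by the discrete abelian group $\widehat H$. Your second step also starts exactly as the paper does (following Raeburn--Szyma\'nski): the projections $V_n$ are the paper's $\chi(F)=\int_H F(\lambda)U_\lambda\,d\lambda$, the commutation rule $\chi(F)a=a\chi(F+\bal(a))$ is identity (\ref{seq1}), and the Stone--Weierstrass argument shows the crossed product is the closure of $\lin\{\chi(F)x : x\in W,\ F\in\widehat H\}$.

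The gap is precisely the point you flag as ``the main obstacle'': you do not prove that the $*$-algebra generated by finitely many elements $\chi(F_1)x_1,\dots,\chi(F_n)x_n$ is finite dimensional, and this is the entire content of the theorem beyond bookkeeping. The danger is real: the nonzero products are of the form $\chi(F_{j_1})\,x_{j_1}x_{j_2}\cdots x_{j_m}$ with the matching condition $F_{j_{k+1}}=F_{j_k}+\bal(x_{j_k})$, and as $m\to\infty$ these words could a priori span an infinite-dimensional space; property (B) alone does not help, since the $x_{j_k}$ need not be zero-balanced. (Your reduction to corners $V_nAV_m=A_{m-n}V_m$ with ``$x\in\calm\subseteq\A$'' also only describes the diagonal corners $n=m$; the off-diagonal corners live over the spectral subspaces $A_{m-n}$, not over $\A$.) The paper closes this gap with an induction on the number $N$ of distinct characters $F_i$ occurring in a word: cutting $x_{j_1}\cdots x_{j_m}$ at the indices where $F_{j_i}=F_{j_1}$ produces segments $y_k$ whose balances telescope to $F_{j_1}-F_{j_1}=0$, so each $y_k$ lies in $\A$; the interior of each segment and the terminal piece involve only characters from a set of size $N$, hence lie in the inductively constructed finite-dimensional space $\calm_N$; property (B) then closes the zero-balanced pieces up into a finite-dimensional subalgebra $Z$ of $\A$, and $\calm_{N+1}=Z+\sum_s Zx_s+\sum_s Zx_s\calm_N$ works uniformly. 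Without this telescoping decomposition (or an equivalent argument), your proof of the second assertion is incomplete.
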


\begin{proof}
The nuclearity is concluded from the observation that $C^*(\F/\I)$ is then evidently the corner of a crossed product of a (possibly non-separable) AF-algebra by an abelian group.

We assume that $\zeta$ is injective (Lemma \ref{lemmaInjectZeta}).
{\em Step 1.}
In the first step we follow the idea in \cite[Lemma 3.1]{raeburnszymanski}.
We denote the crossed product $C^*(\F/\I) \rtimes_\theta H$ by $A$.
Let $\calm(A)$ be the multiplier algebra of $A$.
Let $(U_\lambda)_{\lambda \in H} \subseteq \calm(A)$
be the unitaries inducing the actions $(\theta_\lambda)_{\lambda \in H}$.
Let
$$\chi(F) := \int_H F(\lambda) U_\lambda d \lambda \qquad \forall F \in \widehat H,$$
where we integrate in $\calm(A)$, and
where $d \lambda$ denotes the normalized Haar measure on $H$.
It is easy to see that
$(\chi(F))_{F \in \widehat H}$ forms a family of mutually orthogonal projections in $\calm(A)$.

Recall that $\bal(a)_\lambda a = \lambda_a a  = \theta_\lambda(a)$ for $a \in \cala$ and $\lambda \in H$, and we write the group operation of $\hat H$
additively.
Notice that
\begin{equation} \label{seq1}
\chi(F) a = a \chi(F+\bal(a)) \qquad \forall a \in \cala \, \forall F \in \widehat H.
\end{equation}
Notice that $a \chi(F) \in A$ for all $a \in \cala$ and $F \in \widehat H$.
By an application of the Stone-Weierstrass theorem the linear span of $\widehat H$ is dense in $L^1(H)$.
Hence $A$ is the norm closure of
$$B:= \lin \set{ \chi(F) x }{ x \in W, \, F \in \widehat H}.$$

{\em Step 2.}
It remains to show that $B$ is locally matricial.
Consider a finite subset
$$\Gamma=\{ \chi(F_{1}) x_1, \chi(F_{2}) x_2, \ldots, \chi(F_{n}) x_n\}$$
for some fixed nonzero $x_1,\ldots,x_n \in W$
and $F_1,\ldots,F_n \in \widehat H$.
By enlarging $\Gamma$, if necessary, we can assume that $\Gamma$ is self-adjoint (possible by identity
(\ref{seq1})).

Let $\omega$ be the set of nonzero words in the alphabet $\Gamma$.
By identity (\ref{seq1}) each $y \in \omega$ has a representation
$$y = \chi(F_{j_1}) x_{j_1} \chi(F_{j_2}) x_{j_2}\ldots \chi(F_{j_m}) x_{j_m}
= \chi(F_{j_{1}}) x_{j_1} x_{j_2}\ldots x_{j_m}$$ for some $1 \le
j_1,\ldots,j_m \le n$.
Since $y \neq 0$, we necessarily have
$$F_{j_{k+1}} = F_{j_k} + \bal(x_{j_k}) \qquad \forall k=1,\ldots,m-1.$$
Let
\begin{eqnarray*}
K &=& \{\, x_{j_1} x_{j_2} \ldots x_{j_m} \in \F/\I\, | \, m \ge 1, \, 1 \le j_1,\ldots,j_{m+1} \le n, \\
&& \qquad
F_{j_{k+1}} = F_{j_k} + \bal(x_{j_k}) \, \,\,\forall k = 1,\ldots,m\, \}.
\end{eqnarray*}
Notice that
$$\omega \subseteq \Gamma \cup \{ \chi(F_1), \ldots ,\chi(F_n) \} K \Gamma$$
(products in $A$).
Thus, if we can show that $K$ lies in some finite dimensional
space $\calm_n$ then
$\lin(\omega) = \alg^*(\Gamma)$ is a subspace of the finite dimensional
space
$$\lin(\Gamma \cup \{ \chi(F_1), \ldots ,\chi(F_n) \} \calm_n \Gamma),$$
and we are done.

We shall construct $\calm_n$ by induction.
Let $\gamma \subseteq \{1,\ldots,n\}$ and
$$L_{\gamma}:= \set{ x_{j_1} x_{j_2} \ldots x_{j_m}  \in K}
{ \{F_{j_1}, F_{j_2}, \ldots , F_{j_{m+1}}\} \subseteq \set{F_i}{i \in \gamma} }.$$

If $|\gamma|= 1$ then all $x_{j_k}$ of $x_{j_1} x_{j_2} \ldots x_{j_m} \in L_{\gamma}$ are zero-balanced.
Let $\calm_1 \subseteq \A$ be a finite dimensional $*$-algebra containing
$\set{ x_i \in \A}{ 1 \le i \le n, \, \bal(x_i) = 0}$.
Then it is clear that $L_{\gamma} \subseteq \calm_1$.

By induction hypothesis on $N=1,\ldots,n-1$ we assume that
there exists a finite dimensional vector space $\calm_N$, such that
$L_\gamma \subseteq \calm_N$
for all $\gamma \subseteq \{1,\ldots,n\}$ with $|\gamma| = N$.

Let $\delta \subseteq \{1,\ldots,n\}$ with $|\delta| = N+1$.
Let $x= x_{j_1} x_{j_2} \ldots x_{j_m} \in L_\delta$.
Let
$$\set{1 \le i \le m+1}{F_{j_i} = F_{j_1}} =: \{1=i_1 \le \ldots \le i_M \le m+1\}.$$
For $k=1,\ldots,M-1$ let
$$y_k = \prod_{t = i_k}^{i_{k+1}-1} x_{j_t} .$$
Since $y_k$ is a partial word of the word $x=x_{j_1} x_{j_2} \ldots x_{j_m}$ which lives in $K$,
we get
$$\bal(y_k) = \sum_{t=i_k}^{i_{k+1} -1} \bal(x_{j_{t}}) = \sum_{t=i_k}^{i_{k+1} -1} F_{j_{{t}+1}} - F_{j_{t}}
= F_{j_{i_{k+1}}} - F_{j_{i_k}} = F_{j_1} - F_{j_1} = 0.$$
Hence $y_k$ is zero-balanced and lives in $\A$.
We have
$$x = y_1 y_2 \ldots y_{M-1} x_{j_{i_M}}  x_{j_{i_M +1}} \ldots x_{j_m}.$$
Notice that  for all $k=1,\ldots,M$, both the `middle term' of
$y_k$, i.e.
$$x_{j_{i_k +1}} x_{j_{i_k +2}} \ldots x_{j_{i_{k+1} -2}},$$
and
the `end term' of $x$, i.e. $x_{j_{i_M +1}} \ldots x_{j_m}$, lie in
$L_{\delta \backslash \{j_1\}} \subseteq \calm_N$ (the inclusion is by induction hypothesis).
Thus $y_1,\ldots,y_{M-1}$ lie in the finite dimensional vector space
$$Y= \Big(\sum_{s=1}^n \C x_{s} + \sum_{s,t =1}^n \C x_{s} x_t+ \sum_{s,t =1}^n x_{s} \calm_N x_{t} \Big) \cap \A.$$
Hence $Z=\alg^*(Y)$ is a finite dimensional vector space since $Y\subseteq \A$.
Thus $y_1 \ldots y_{M-1} \in Z$, and
$x$ lies in the finite dimensional vector space
$$\calm_{N+1} =Z + \sum_{s=1}^n Z x_{s} + \sum_{s=1}^n Z x_s \calm_N.$$
Notice that the choice of $\calm_{N+1}$ is independent of $\delta$ and $x \in L_\delta$.
This completes the induction.
If $N+1= n$ then the proof is complete since then $K= L_{\{1,\ldots,n\}} \subseteq \calm_n$.
\end{proof}

{\bf Acknowledgement.} We would like to express our gratitude for
the support we received while working on this project at the
Universities of M\"unster and Rome ``Tor Vergata''; from the
Operator Algebras groups at the respective universities and the EU
IHP Research Training Network - Quantum Spaces and Noncommutative
Geometry.
The first named author thanks Joachim Cuntz for his invitation and hospitality
in M\"unster.

\bibliographystyle{plain}
\bibliography{references}

\end{document}